\newtheorem{theorem}{Theorem}
\newtheorem{lemma}{Lemma}
\newtheorem{remark}[theorem]{Remark}
\newtheorem{assumption}{Assumption}
\newenvironment{proof}{{\bf Proof:}}{\hfill\rule{2mm}{2mm}}
\newcommand{\remove}[1]{}
\newcommand{\g}{\gamma}
\newcommand{\pb}{p_{\rm bal}}
\newcommand{\ph}{p_{\rm H}}
\newcommand{\pl}{p_{\rm L}}
\newcommand{\ch}{c(p_{\rm H})}
\newcommand{\cl}{c(p_{\rm L})}
\newcommand{\rl}{R_{\rm L}}
\newcommand{\rh}{R_{\rm H}}
\newcommand{\wl}{w(\gamma p_{\rm L})}
\newcommand{\wh}{w(\gamma p_{\rm H})}
\newcommand{\rc}{\rho_{\rm c}}
\newcommand{\rw}{\rho_{\rm w}}
\definecolor{purple}{rgb}{0,0,0}
\definecolor{blue}{rgb}{0,0,0}
\definecolor{red}{rgb}{0,0,0}
\begin{document}

\title{\vspace{-1in} \bf Duopolistic platform competition for revenue and throughput}
\author{Mansi Sood\thanks{Carnegie Mellon University, Pittsburgh, USA.} \and Ankur A. Kulkarni\thanks{Systems and Control Engineering, Indian Institute of Technology Bombay, Powai, Mumbai 400076.} \and Sharayu Moharir\thanks{Electrical Engineering, Indian Institute of Technology Bombay, Powai, Mumbai 400076.} }
\date{}
\maketitle

\begin{abstract}
We consider two competing platforms operating in a two-sided market and offering identical services to their customers at potentially different prices. The objective of each platform is to maximize its throughput or revenue by suitably pricing its services. We assume that customers have a preference or loyalty to the platforms while the workers freelance for the two platforms. Assuming that the resulting interaction between the users is such that their aggregate utility is maximized, we show that for each value of the loyalty, there exists a pure strategy Nash equilibrium for both the throughput and revenue competition game and characterize it.
\end{abstract}
\vspace{20pt}
\textbf{Keywords:} Platform Competition, Two-sided Market, Crowdsourcing and Multi-leader-follower games
%
\section{Introduction and related work}
The recent past has witnessed the meteoric rise of two-sided marketplaces in domains such as transportation, e.g. Uber, Lyft and the hotel industry, e.g. Airbnb. Platforms like Uber, Lyft, and Airbnb provide a way for the buyer and seller to exchange information about their requirements and thereby perform a transaction. In return for this service, the platform charges a commission for each transaction. The rise in the popularity of these services has led to a growing body to research which focuses on their design and operation \cite{alarabi2016demonstration, banerjee2016multi, chen2015peeking, chen2016dynamic, fang2017prices, stiglic2015benefits, rochet2006two, rysman2009economics, weyl2010price, eisenmann2006strategies} and quantifying their impact \cite{benjaafar2018peer}. 
An important issue that complicates the task of evaluating the impact of such marketplaces is competition among platforms offering similar services. In many cases, two-sided marketplaces are duopolies in which two competing platforms provide near-identical services to users\footnote{In the USA, Uber, and Lyft, and in India, Ola and Uber are rivals in the on-demand transportation business.}. Thus their impact cannot be measured by merely comparing `before' and `after' scenarios with one platform. One must also take into account the influence of competition. 

Our specific focus in this paper is on duopolistic price competition between platforms that provide similar service. We characterize Nash equilibria that emerge from this competition when firms are competing for throughput and for revenue.
In the model we consider, each platform maximizes the \emph{revenue} it generates or the volume of transactions it makes, i.e., its \emph{throughput}. 
In order to maximize its throughput/revenue, each platform incentivises the participation of price-sensitive workers and customers by suitably pricing its services. 
Pricing in two-sided markets is a double-edged sword: while a low price attracts customers, it deters workers. Likewise, a high price deters customers and attracts workers (see e.g., \cite{caillaud2003chicken} where the `chicken and egg' problem is discussed in which a platform must decide which side to incentivise given that each side benefits from increased participation on the other side). In our model, workers freelance for the two platforms and a customer requires one worker for service and a worker can serve at most one customer. Consequently, the number of transactions made via a platform is governed by the user group that sees lower participation at that platform and does not necessarily depend on the total number of users connected to it. In addition, workers/customers have the freedom to choose to provide/avail service at any platform based on the price structure in the market. 

Therefore, a key challenge here is that the price set by one platform affects the user turnout at the other platform. It is therefore of interest to understand what kind of equilibrium pricing structure emerges in this duopoly. We study precisely this question in this paper. 
We model the resulting price competition as a non-cooperative game in which two competing platforms price their services in a manner which maximises their transaction volumes or revenue. The decision variables for the platforms are the prices, which they choose anticipating the outcome of the resulting optimization by the users. Concretely, the objective of each platform is to maximize the throughput/revenue that results from a lower level optimization, which in turn depends on the prices chosen by the platforms.
\begin{figure}[h]
\centering\includegraphics[scale=0.3]{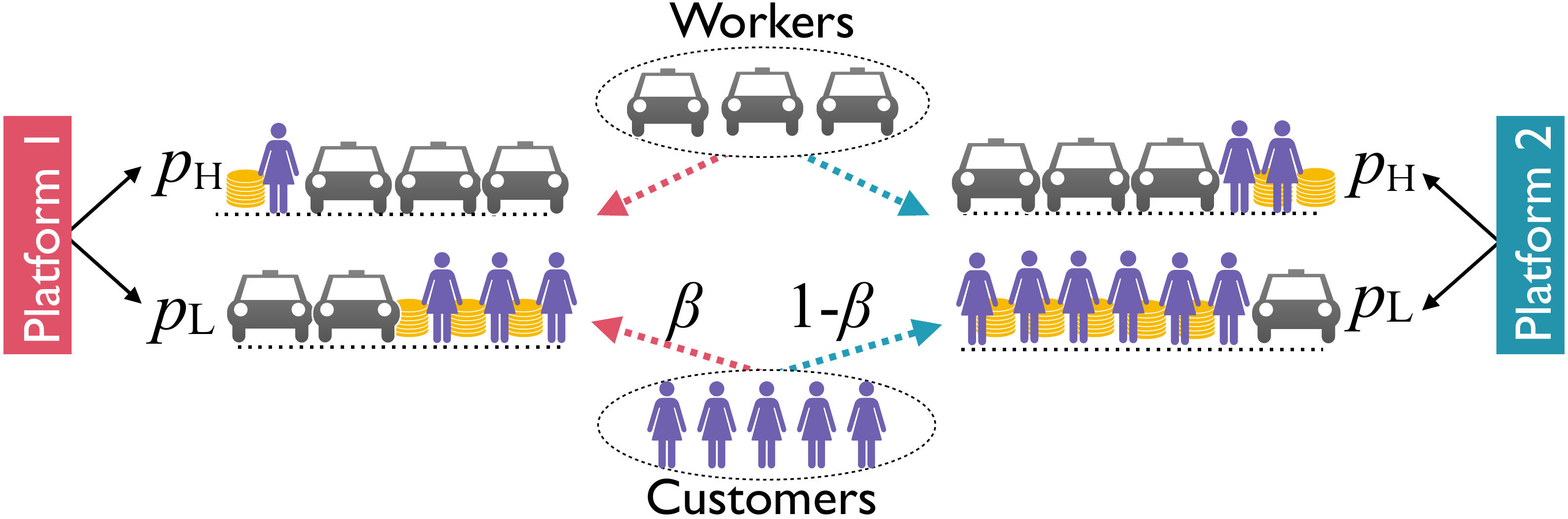}
\caption{Allocation of customers and workers to the platforms.} \label{fig:setting}
\end{figure}

The prices announced by the platforms determine, via demand and supply curves, the total number of customers and workers willing to enter the market at those prices. However, if one platform announces a high price and the other announces a low price, all customers (workers) willing to avail (resp., provide) service at the high (resp., low) price are also willing to avail (resp., provide) it at the low (resp., high) price. Consequently, prices and the associated demand/supply curves do not automatically determine the number of customers or workers joining either platform. {To disambiguate this, we assume that the interaction between workers and customers that results after platforms announce prices leads to an allocation which maximizes the aggregate utility of users.} However, this assumption is not sufficient to ensure uniqueness in the allocation of users to the two platforms given the prices announced by them. This non-uniqueness poses a significant analytical and conceptual challenge (see \cite{kulkarni2013consistency,kulkarni2014shared}) since the platforms set their prices to maximize their throughput/revenue in anticipation of the allocation of users which in turn determines the observed throughput/revenue at the two platforms\footnote{Incidentically, the nonuniqueness is also indicative of why platforms may want to use ``advertising", and other forms of payoff-irrelevant signaling in order to encourage equilibria that are more beneficial to them.}. To make the analysis tractable, we introduce a mechanism for selecting one solution of the user allocation problem described above. We achieve this by factoring in the potential asymmetry in the customer preference for the two platforms. Specifically, each allocation in the set of optimal allocations corresponds to a unique fraction $\beta$ of customers who desire to avail service at Platform 1, as illustrated in Figure \ref{fig:setting}.
We refer to this fixed fraction $\beta$ as the customer \emph{loyalty} and allow it to take values in {$(0,1)$}. We show that a pure strategy {Nash equilibrium for the price competition game for throughput and revenue} exists for each value of the loyalty. {\color{red}In Section 3.6, we generalize our analysis to include price dependent loyalty wherein $\beta$ is a function of prices to model customer preference for lower prices and characterize pure strategy Nash equilibria for throughput and revenue games even in the presence of price dependent loyalty.}

As mentioned before, in the past decade, the task of pricing in two-sided markets has garnered a lot of attention across various communities \cite{rochet2006two, rysman2009economics, weyl2010price, eisenmann2006strategies,comsnets18}. One model~\cite{rochet2003platform} of platform competition consists of users modeled with their private types, using which `quasi demand' curves for the platforms' services are obtained as the probability that a user would like to join a particular platform. This model is more suited to situations where users make long-term subscriptions to platforms they join. We note that our approach is distinct from the above since we assume that users join platforms based on their instantaneous preferences and the market settles at the allocation that maximizes the social welfare of workers and customers. Our approach models situations where users are concerned with the immediate matching of their requests with high disutility for not being matched. The present work significantly extends its conference version~\cite{spcom18} where only throughput competition is considered.

{
The paper is organized as follows. Section~\ref{sec:model} explains the model and Section~\ref{sec:society_routing} explains the {allocation} of users to the platforms. Section~\ref{sec:eq_throughput} and Section~\ref{sec:eq_revenue} provide a characterization of the equilibrium of the throughput and revenue games, respectively. Section~\ref{sec:discussion} presents a discussion on the types of equilibria existing under different market conditions. We summarize our key contributions in Section~\ref{sec:summary} and provide detailed proofs in the Appendix.}

\section{Problem setting}
\label{sec:model}
There are two platforms -- Platform 1 and Platform 2 which provide identical services to their users. In order to {maximize} its throughput (or revenue), each platform chooses one amongst a low price and a high price and advertises this price to its customers. We assume that of the total customers who are served, a fixed fraction ($\beta$, $0 < \beta < 1$) get served at Platform 1. In our setting, each customer's service needs can be satisfied by one worker and each worker can serve at most one customer at a time. We assume that users respond to the prices announced by the platform in a manner that maximises their aggregate utility. The platforms then match customers and workers allocated to them. Corresponding to every payment made by a customer, each platform retains a fixed fraction ($1-\gamma$, $0 < \gamma < 1$) and passes on the remaining amount to the worker who serves that customer. {For analytical tractability, we assume that each platform retains the same fraction $(\gamma)$ per payment.}

\subsection{Supply and demand}
We use the following notation to quantify the availability of freelance workers and customers.
\begin{enumerate}
		\item[1.] $w(p)$: Number of workers willing to work at price $p$. 	
		\item[2.] $c(p)$: Number of customers willing to pay price $p$. 
		\item[3.] $p_i$: Price paid by a customer for service at Platform $i$,
		\item[4.] $c_i$: Number of customers joining Platform $i$,
		\item[5.]  $w_i$: Number of workers joining Platform $i,$
		\\where $i=1,2$.
\end{enumerate}
In each transaction at Platform $i$, a customer pays $p_i$ to Platform $i$ and a worker receives a share $\gamma p_i$ from the transaction, where $i=1,2$.
We make the following assumption on the nature of supply/demand curves and the prices announced by the platforms.
\begin{assumption}[Supply and demand]\label{ass:demandsuppply}{\color{white}a}
\begin{enumerate}
\item[a.] Demand $c(p)$ is a strictly decreasing function of price $p$.
\item[b.] Supply $w(p)$ is a strictly increasing function of $p$.
\item[c.] Prices $p_1$ and $p_2$ can take values amongst a high price $\ph$ or a low price $\pl$, where $\pl < \ph$. 
\item[d.] There exists a unique market clearing price $\pb$ such that $w(\gamma \pb)=c(\pb)$.
\end{enumerate}
\end{assumption}
{Note that we focus on a setting where the supply/demand are completely determined by how the platforms price their services and consequently the supply/demand do not depend on the participation from the other side. Such a situation would persist in service critical applications where the customers' utility only depends on whether or not they get served regardless of how many potential workers are available.}
\subsection{Platform's objective}
We consider two platform objectives -- throughput maximization and revenue maximization. 
The throughput of a platform is given by the minimum of the following two quantities -- the number of workers and the number of customers joining it. When Platform 1 plays $k$ and Platform 2 plays $\ell$, the \emph{throughput} $N_i^{k\ell}$ of Platform $i$ is $\min\{c_i, w_i\}$ and the \emph{revenue} of Platform $i$ is given by $(1-\g)p_i N_i^{k\ell}$, where $i\in \{1,2\}$ and $k,\ell \in \{ \rm L,H\}.$ The platforms price their services in a manner that maximizes their throughput or revenue.

\subsection{Users' objective}
We assume that the workers and customers get matched through the platforms in a manner 
that maximizes their aggregate utility. In order to characterize the utility of the users, we define the following quantities. 
\begin{enumerate}
\item[1.] $b_{w_i}( \gamma p_i)$: Benefit of a worker from serving a customer through Platform $i$, 
\item[3.] $b_{c_i}( p_i)$: Benefit of a customer getting served by a worker through Platform $i$,
\item[2.] $m$: Cost of maintenance for a worker per transaction,
\item[4.] $-d_w$: Disutility experienced by an unmatched worker, 
\item[5.] $-d_c$: Disutility experienced by an unmatched customer, 
\item[6.] $u_i=u_i(p_i):=b_{w_i}( \gamma p_i)+b_{c_i}( p_i)-m-p_i$, where $i=1, 2.$. This is the benefit to the users per transaction at Platform $i$ when it announces a price $p_i$. This includes the utility of a worker-customer pair interacting at Platform $i$, where $i=1, 2.$
\end{enumerate}
We make the following assumptions in describing the utility of users.
\begin{assumption}[Utility of users]\label{ass:utility}{\color{white}a}
\begin{enumerate}
\item[a.] {$p_1, p_2, m, d_w, d_c >0$.}
\item[b.] Price $p_i$ is such that utility of a worker or a customer joining the platform is positive, i.e., $b_{w_i}( \gamma p_i)-m > 0$ and $b_{c_i}( p_i)-p_i > 0$, where $i=1, 2.$ 
\end{enumerate}
\end{assumption}
In the absence of Assumption~\ref{ass:utility}(b), individual workers and customers would not have any incentive to join the platform. 

\section{Allocation of users}%
\label{sec:society_routing}
\label{sec:routing} 
{Recall that once the two platforms set their prices, users are allocated to the platforms in a manner which maximizes their aggregate utility subject to the following constraints.} 
\begin{enumerate}
	\item[a)]  The number of workers/customers joining a particular platform cannot exceed the supply/demand level corresponding to the price set by it. 
	\item[b)] The total number of workers/customers joining either of the platforms can be at most equal to the supply/demand corresponding to the higher/lower price. 
\end{enumerate} The preceding constraints are insufficient to ensure the uniqueness of aggregate utility-maximizing allocation. Hence, we additionally impose the following additional constraint. Recall that $\beta  $ denotes the loyalty.
\begin{enumerate}
	\item[c)] If $c_1$ and $c_2$ customers join Platform 1 and Platform 2 respectively, then the ratio $c_1/c_2=\beta/(1-\beta)$.
\end{enumerate}


\begin{align*}
\underset{w_1,w_2, c_1, c_2} 
\max \ & \ u_1\min\{c_1, w_1\}+u_2 \min\{c_2, w_2\}\\
&-d_w (w_1-\min\{c_1, w_1\}+w_2-\min\{c_2, w_2\}) \\
&-d_c (c_1-\min\{c_1, w_1\}+c_2-\min\{c_2, w_2\}) \qquad  \hfill{\text{(S)}}\nonumber \\
\text{s.t}. \quad & 0 \leq w_1   \leq   w(\gamma p_1), \\
					 &0 \leq w_2  \leq   w(\gamma p_2), \\
					 &w_1+w_2   \leq  w(\max\{\gamma p_1, \gamma p_2\}),\\
					 &0 \leq c_1  \leq   c(p_1), \\
					 &0 \leq c_2  \leq   c(p_2), \\
					 &c_1+c_2   \leq  c(\min\{p_1,p_2\}),	\\								
					&c_1  =  \beta( c_1+c_2).
\end{align*}

In a solution $w_1,w_2, c_1, c_2$ of the above optimization, $w_i,c_i$ respectively denote the number of workers and customers that join platform $i$ when prices $p_1,p_2$ are announced.
 \begin{lemma}\label{lem:lemma1}
 The optimal {solution} to problem (S) is achieved at a point \\$(w_1^*,w_2^*,c_1^*,c_2^*)$ for which $c_1^* \geq w_1^*$ and $c_2^*\geq w_2^*$, with equality holding in at least one case. 
  \end{lemma}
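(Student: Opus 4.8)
The plan is to argue by a pair of local improvement (exchange) arguments, exploiting that the objective rewards matched pairs with the strictly positive per-transaction utilities $u_1,u_2$ while charging the strictly positive penalties $d_w,d_c$ for every unmatched worker and every unmatched customer. First I would observe that the feasible set of (S) is compact and the objective is continuous (a combination of the continuous maps $\min\{c_i,w_i\}$ with affine terms), so an optimal solution $(w_1^*,w_2^*,c_1^*,c_2^*)$ exists. Writing $n_i=\min\{c_i,w_i\}$, the number of unmatched workers at Platform $i$ is $w_i-n_i\ge 0$ and the number of unmatched customers is $c_i-n_i\ge 0$; these enter the objective only through the negative terms $-d_w(w_i-n_i)$ and $-d_c(c_i-n_i)$, and this is the lever for both parts of the claim.

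For the inequality $c_i^*\ge w_i^*$, I would show that any feasible point with $w_i>c_i$ for some $i$ cannot be optimal. Suppose $w_1>c_1$ (the case $i=2$ is identical). Replace $w_1$ by $c_1$ and leave $w_2,c_1,c_2$ unchanged. This is feasible: the only constraints touching $w_1$ are the box bound $0\le w_1\le w(\gamma p_1)$ and the aggregate bound $w_1+w_2\le w(\max\{\gamma p_1,\gamma p_2\})$, both preserved under a decrease of $w_1$, and the loyalty constraint $c_1=\beta(c_1+c_2)$ involves only the customer variables, so $w_1$ can be lowered in isolation. Under this change $\min\{c_1,w_1\}$ stays equal to $c_1$, the matched count and all customer terms are unaffected, while the unmatched-worker count at Platform 1 drops from $w_1-c_1$ to $0$; hence the objective strictly increases by $d_w(w_1-c_1)>0$. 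Thus every optimizer satisfies $c_i^*\ge w_i^*$ for both $i$.

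For the equality claim I would argue by contradiction, supposing an optimizer with $c_1^*>w_1^*$ and $c_2^*>w_2^*$ simultaneously. By the previous step $n_i=w_i^*$, so the throughput is $w_1^*+w_2^*$ and the only active penalty is $-d_c\bigl((c_1^*-w_1^*)+(c_2^*-w_2^*)\bigr)$. The loyalty constraint forces $(c_1,c_2)=(\beta t,(1-\beta)t)$ with $t=c_1+c_2$, so I would shrink $t$ along this ray: as $t$ decreases both $c_i$ decrease but stay $\ge w_i^*$ as long as $t\ge t_{\min}:=\max\{w_1^*/\beta,\,w_2^*/(1-\beta)\}$, keeping all customer constraints satisfied and the matched count fixed at $w_1^*+w_2^*$. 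Since the unmatched-customer total equals $t-(w_1^*+w_2^*)$, decreasing $t$ strictly raises the objective by $d_c$ times the decrease; and $c_1^*>w_1^*$, $c_2^*>w_2^*$ give $t^*>t_{\min}$, contradicting optimality. Hence at least one of $c_1^*=w_1^*$, $c_2^*=w_2^*$ must hold, which together with the first part proves the lemma.

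I expect the main obstacle to be handling the asymmetry between the worker and customer sides created by the loyalty constraint $c_1=\beta(c_1+c_2)$. On the worker side there is no coupling, so excess workers can be trimmed platform by platform, which is why $c_i^*\ge w_i^*$ holds with no forced equality; on the customer side the fixed ratio $\beta:(1-\beta)$ prevents trimming a single platform independently, and one can only slide $t$ down to $t_{\min}$, at which exactly the binding platform attains $c_i=w_i$. Tracking which constraint becomes binding, and verifying that none of the box or aggregate supply/demand bounds is violated during these perturbations, is the step that needs the most care.
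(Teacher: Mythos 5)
Your proof is correct and follows essentially the same route as the paper's: trim excess workers platform-by-platform to gain $d_w$ per unit (the paper's cases 1--2), and when both platforms have strictly excess customers, shrink the total $c_1+c_2$ along the ray fixed by the loyalty constraint to gain $d_c$ per unit (the paper's case 3). Your additions --- the compactness argument for existence and the explicit identification of $t_{\min}$ --- are harmless refinements of the same perturbation argument.
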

 \begin{proof}
Suppose the claim is not true, i.e., either $c_1^* < w_1^*$ or $c_2^* < w_2^*$ or $c_1^* > w_1^*$ and $c_2^* > w_2^*$. We separately analyse these cases,
\begin{enumerate}
\item[1.] If $c_1^* < w_1^*$, then note that taking $w_1'=c_1^*$, the point $(w_1',w_2^*,c_1^*,c_2^*)$ is feasible since $w_1'=c_1^*<w_1^*\leq w(\gamma p)$ and $w_1'+w_2^*=c_1^*+w_2^*<w_1^*+w_2^*\leq w(\gamma p)$. Moreover, the objective value increases by $d_w (w_1^*-c_1^*)$. 
\item[2.] For the case $c_2^* < w_2^*$, as above we can get a higher objective value by taking $w_2$ as $c_2^*$.
\item[3.] Finally, when $c_1^* > w_1^*$ and $c_2^* > w_2^*$, we can decrease $c_1^*, \ c_2^*$ proportionately to get a larger objective value while ensuring feasibility is retained. Suppose, for some $x>0$, we have $c_1^*=\beta x, \ c_2^*=(1-\beta) x$. Now, there exists a small positive $\Delta$ such that $c_1=\beta (x-\Delta)> w_1^*, \ c_2=(1-\beta) (x-\Delta)> w_2^*$ is feasible and admits a higher objective value.
\end{enumerate}
Thus in all the three cases, $(w_1^*,w_2^*,c_1^*,c_2^*)$ is sub-optimal which gives a contradiction and hence Lemma \ref{lem:lemma1} holds true. \end{proof}

We proceed with evaluating the throughput/revenue for various combinations of prices announced by the platform.
Using Lemma \ref{lem:lemma1}, the objective can be rewritten as $u_1 w_1+u_2 w_2 -d_c (c_1-w_1)-d_c(c_2-w_2)$. {For the purpose of analysing the equilibrium strategies, we make the following assumption.}
{\begin{assumption}[Disutility of customers]\label{ass:utility2}
We assume that the disutility of an unmatched customer is such that $d_c >\frac{\beta}{1-\beta}u_1$ and $d_c>\frac{1-\beta}{\beta}u_2$. 
\end{assumption}}
This is motivated by settings in which the requested service is critical and the customer is at a great loss if left unserved. 
We work with Assumption~\ref{ass:utility2} for the remainder of our analysis.
\begin{lemma} \label{lem:lemma2}
If Assumption \ref{ass:utility2} holds, then at the social welfare optimal point \\$(w_1^*,w_2^*,c_1^*,c_2^*)$ there are no unmatched customers or workers, i.e., $w_1^*=c_1^*$ and $w_2^*=c_2^*$. 
\end{lemma}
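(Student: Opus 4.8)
The plan is to leverage Lemma~\ref{lem:lemma1}, which already gives $c_1^*\ge w_1^*$ and $c_2^*\ge w_2^*$ with equality in at least one of the two, and to upgrade this to equality in \emph{both}. Since the candidate point and all the perturbations I will use stay in the region $c_i\ge w_i$, I may work throughout with the reduced objective $f=u_1 w_1+u_2 w_2-d_c(c_1-w_1)-d_c(c_2-w_2)$ noted just before the lemma. I proceed by contradiction and split according to which coordinate already attains equality; the two cases are symmetric under exchanging the indices $1\leftrightarrow 2$ together with $\beta\leftrightarrow 1-\beta$, which mirrors the two inequalities in Assumption~\ref{ass:utility2}, so it suffices to treat one carefully.

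Assume first $w_1^*=c_1^*$ but $w_2^*<c_2^*$ (the trivial all-zero case $c_1^*+c_2^*=0$ forces every variable to vanish and the claim holds, so take the total positive, whence $c_1^*,c_2^*>0$ since $\beta\in(0,1)$). The natural idea of raising $w_2$ toward $c_2^*$ may be blocked by the supply caps $w_2\le w(\gamma p_2)$ or $w_1+w_2\le w(\max\{\gamma p_1,\gamma p_2\})$, so instead I contract the customer side: for small $\delta>0$ set $c_2=c_2^*-\delta$ and, to preserve the loyalty ratio $c_1/c_2=\beta/(1-\beta)$, set $c_1=c_1^*-\tfrac{\beta}{1-\beta}\delta$; simultaneously lower $w_1$ by the same amount $\tfrac{\beta}{1-\beta}\delta$ so that the matched pair keeps $c_1=w_1$, and leave $w_2$ unchanged. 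For $\delta$ small all box and sum constraints continue to hold (every coordinate only decreases or is unchanged), the ratio constraint holds by construction, and $c_2^*-\delta>w_2^*$ keeps the point in the region $c_2\ge w_2$; hence the reduced objective is still the correct one at the perturbed point.

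It remains to evaluate the change in $f$. The terms $-d_c(c_1-w_1)$ contribute nothing since $c_1-w_1$ stays at $0$, and $w_2$ is untouched, so the only changes come from $u_1 w_1$ decreasing and from the penalty $-d_c(c_2-w_2)$ relaxing, giving $f'-f=\delta\bigl(d_c-\tfrac{\beta}{1-\beta}u_1\bigr)$, which is strictly positive by Assumption~\ref{ass:utility2}. This contradicts optimality, so $w_2^*<c_2^*$ is impossible. The symmetric case $w_2^*=c_2^*$, $w_1^*<c_1^*$ is handled by contracting $c_1$ by $\delta$, $c_2$ by $\tfrac{1-\beta}{\beta}\delta$, and $w_2$ by $\tfrac{1-\beta}{\beta}\delta$, yielding $f'-f=\delta\bigl(d_c-\tfrac{1-\beta}{\beta}u_2\bigr)>0$ by the second inequality of Assumption~\ref{ass:utility2}. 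In both cases we reach a contradiction, so $w_1^*=c_1^*$ and $w_2^*=c_2^*$ hold simultaneously.

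The main obstacle, and the reason Assumption~\ref{ass:utility2} is stated exactly as it is, is choosing a feasible improving direction. Expanding the under-matched worker side is tempting but can be infeasible when supply is saturated, so the workable perturbation must instead shrink the surplus customers; the loyalty constraint then forces a coupled reduction in the \emph{other} platform's customers (and the matched workers there), and whether this net trade is profitable hinges precisely on the threshold $d_c>\tfrac{\beta}{1-\beta}u_1$ (resp.\ $d_c>\tfrac{1-\beta}{\beta}u_2$). Verifying that this contraction stays feasible while keeping the point inside the region $c_i\ge w_i$, so that the reduced objective continues to apply, is the only place where a little care is needed.
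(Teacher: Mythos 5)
Your proof is correct and follows essentially the same route as the paper: the paper substitutes the loyalty constraint and the equality from Lemma~\ref{lem:lemma1} to reduce (S) to a linear program in which the coefficient of the excess-customer variable is $\frac{1-\beta}{\beta}u_2-d_c$ (resp.\ $\frac{\beta}{1-\beta}u_1-d_c$), negative by Assumption~\ref{ass:utility2}, and then decreases that variable—which is exactly your coupled contraction of $c_1,c_2$ and the matched workers, yielding the same improvement $\delta\bigl(d_c-\tfrac{\beta}{1-\beta}u_1\bigr)$ or $\delta\bigl(d_c-\tfrac{1-\beta}{\beta}u_2\bigr)$. Your explicit handling of the degenerate all-zero case and of feasibility of the contraction is a minor additional care not spelled out in the paper, but the argument is the same.
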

\begin{proof}
From Lemma \ref{lem:lemma1}, we can have the following two cases, $c_2^*=w_2^*$, $c_1^* \geq w_1^*$ or $c_1^*=w_1^*$, $c_2^* \geq w_2^*$.
We first analyse the case $c_2^*=w_2^*$, $c_1^* \geq w_1^*$.
The optimization problem becomes,\\
\begin{align*}
\underset{w_1, c_1}
\max \ & (u_1+d_c)w_1 +(\frac{1-\beta}{\beta}u_2 -d_c )c_1\nonumber \\
\text{s.t.} \quad & 0 \leq w_1   \leq   \min\{c_1,w(\gamma p_1)\}, \\
					& w_1+\frac{1-\beta}{\beta}c_1   \leq w(\max\{\gamma p_1, \gamma p_2\}),\\	
					& c_1 \leq \min\{ \frac{\beta}{1-\beta} w(\gamma p_2), c(p_1), \frac{\beta}{1-\beta}c(p_2),   \beta c(\min\{p_1,p_2\}) \}.
\end{align*}
Suppose $w_1^*\neq c_1^*$. From Lemma \ref{lem:lemma1}, it follows that $w_1^* < c_1^*$. Now, consider $\Delta>0$ such that $w_1^*<c_1^*-\Delta<c_1^*$. Note that in the objective function, the coefficient of $c_1$ is negative and taking $c_1$ as $c_1^*-\Delta$ retains feasibility and admits a higher objective value. This violates the optimality of $(w_1^*,w_2^*,c_1^*,c_2^*)$ and hence $w_1^*\neq c_1^*$. Next, we analyse the case $c_1^*=w_1^*$, $c_2^* \geq w_2^*$. Using Lemma \ref{lem:lemma1} simplifies (S) to the following equivalent optimization problem.

\begin{align*}
\underset{w_2, c_2}
\max \ & (u_2 +d_c)w_2+( \frac{\beta}{1-\beta}u_1-d_c )c_2\nonumber \\
\text{s.t.} \quad & 0 \leq w_2   \leq    \min\{c_2,w(\gamma p_2)\}, \\
					& \frac{\beta}{1-\beta}c_2 +w_2   \leq w(\max\{\gamma p_1, \gamma p_2\}),\\	
					& c_2 \leq \min\{ \frac{1-\beta}{\beta} w(\gamma p_1), c(p_2), \frac{1-\beta}{\beta}c(p_1),   (1-\beta) c(\min\{p_1,p_2\}) \}.
\end{align*}
Arguing as above, we can show that a point $(w_1^*,w_2^*,c_1^*,c_2^*)$ such that $w_2^* < c_2^*$ is sub-optimal and hence $c_2^* =w_2^*$.
\end{proof}
\begin{theorem}[Routing of users]\label{thm:routing}
The solution to the social welfare maximization problem (S) is the following,
\begin{align*}
&c_1^*=w_1^*= \min \Big \{   w(\gamma p_1), \frac{\beta}{1-\beta} w(\gamma p_2), c(p_1), \frac{\beta}{1-\beta}c(p_2), \\
& \qquad \qquad \beta c(\min\{p_1,p_2\}), \beta w(\max\{\gamma p_1, \gamma p_2\}) \Big \},\\
&c_2^*=w_2^*= \min \Big \{ \frac{1-\beta}{\beta} w(\gamma p_1),  w(\gamma p_2), \frac{1-\beta}{\beta}c(p_1), c(p_2), \\
& \qquad \qquad (1-\beta) c(\min\{p_1,p_2\}), (1-\beta) w(\max\{\gamma p_1, \gamma p_2\})\Big  \}.
\end{align*}
\end{theorem}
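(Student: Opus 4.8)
The plan is to exploit Lemma~\ref{lem:lemma2} to collapse problem (S) into a one-dimensional linear program whose optimum is read off as the tightest binding constraint. By Lemma~\ref{lem:lemma2}, under Assumption~\ref{ass:utility2} the welfare-optimal allocation leaves no unmatched users, so $w_1^*=c_1^*$ and $w_2^*=c_2^*$. Combining this with constraint (c), namely $c_1=\beta(c_1+c_2)$, which rearranges to $c_2=\frac{1-\beta}{\beta}c_1$, every decision variable becomes a function of $c_1$ alone. Substituting $w_1=c_1$ and $w_2=c_2$ into the reduced objective $u_1 w_1+u_2 w_2-d_c(c_1-w_1)-d_c(c_2-w_2)$ eliminates the disutility terms and leaves $u_1 c_1+u_2 c_2=\left(u_1+\frac{1-\beta}{\beta}u_2\right)c_1$. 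Since Assumption~\ref{ass:utility}(b) forces $u_1,u_2>0$, the coefficient of $c_1$ is strictly positive, so social welfare is strictly increasing in $c_1$ and the problem reduces to maximizing $c_1$ over the feasible region.

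Next I would substitute $w_1=c_1$, $w_2=\frac{1-\beta}{\beta}c_1$, and $c_2=\frac{1-\beta}{\beta}c_1$ into each of the six inequality constraints of (S) and solve each for $c_1$. The own-platform bounds $w_1\leq w(\gamma p_1)$ and $c_1\leq c(p_1)$ give $c_1\leq w(\gamma p_1)$ and $c_1\leq c(p_1)$ directly. The opposite-platform bounds $w_2\leq w(\gamma p_2)$ and $c_2\leq c(p_2)$ become $c_1\leq\frac{\beta}{1-\beta}w(\gamma p_2)$ and $c_1\leq\frac{\beta}{1-\beta}c(p_2)$. Finally, both aggregate caps carry $\frac{1}{\beta}c_1$ on the left, since $w_1+w_2=c_1+c_2=\frac{1}{\beta}c_1$, so $w_1+w_2\leq w(\max\{\gamma p_1,\gamma p_2\})$ and $c_1+c_2\leq c(\min\{p_1,p_2\})$ yield $c_1\leq\beta w(\max\{\gamma p_1,\gamma p_2\})$ and $c_1\leq\beta c(\min\{p_1,p_2\})$. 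As $c_1$ is maximized subject to all six upper bounds at once, the largest feasible value is their minimum, which is precisely the claimed expression for $c_1^*=w_1^*$; multiplying through by $\frac{1-\beta}{\beta}$ then recovers $c_2^*=w_2^*$, the factor cancelling on the four terms carrying $\frac{\beta}{1-\beta}$ or $\beta$ and scaling the remaining two.

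The main obstacle is purely bookkeeping rather than conceptual: one must translate each constraint carefully into an upper bound on $c_1$ and confirm that the nonnegativity constraint $c_1\geq 0$ is never the binding one, which holds since every listed bound is a positive supply or demand value. Notably, no case analysis on the prices is needed, since the $\min$ and $\max$ operators are carried symbolically through the two aggregate constraints and therefore remain valid simultaneously across every configuration $(p_1,p_2)\in\{\pl,\ph\}^2$. Thus the result follows immediately once the reduction of Lemma~\ref{lem:lemma2} is in place.
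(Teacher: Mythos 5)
Your proposal is correct and follows essentially the same route as the paper: invoke Lemma~\ref{lem:lemma2} to set $w_i^*=c_i^*$, use the loyalty constraint to reduce (S) to a one-dimensional linear program in $c_1$ with objective coefficient $u_1+\frac{1-\beta}{\beta}u_2>0$, and read off the optimum as the minimum of the six upper bounds. The only difference is that you spell out the constraint-by-constraint bookkeeping and the positivity of $u_1,u_2$ via Assumption~\ref{ass:utility}(b), which the paper leaves implicit.
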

\begin{proof}
From Lemma \ref{lem:lemma2}, we get the following equivalent optimization problem for (S).\\
\begin{align*}
\underset{c_1}
\max \ & (u_1 + \frac{1-\beta}{\beta}u_2)c_1\nonumber \\
\text{s.t.} \quad & 0 \leq c_1  \leq \min\{ w(\gamma p_1), \frac{\beta}{1-\beta} w(\gamma p_2), c(p_1), \frac{\beta}{1-\beta}c(p_2), \\
					&  \qquad \qquad \beta c(\min\{p_1,p_2\}), \beta w(\max\{\gamma p_1, \gamma p_2\}) \}.
\end{align*}
Note that in the objective function, the coefficient of $c_1$ is positive and thus the largest feasible $c_1$ is optimal. The values of $c_2,w_1,w_2$ are then obtained using Lemma~\ref{lem:lemma2}.
\end{proof}
{
Thus, if Assumptions~\ref{ass:demandsuppply},~\ref{ass:utility}~and \ref{ass:utility2} hold, there is a unique solution to Problem (S). Thus, for the optimal allocation problem in the \textit{absence} of loyalty constraint, each of the possible multiple solutions corresponds to a unique loyalty $c_1/(c_1+c_2)$; hence 
$\beta$ can be interpreted as a mechanism to pick a unique solution from the solution set of this problem.}

\section{Equilibrium strategies for throughput competition }\label{sec:eq_throughput} 
In this section, we analyse the non-cooperative game between the platforms as they compete for throughput. The two platforms can choose to play either a low price $\pl$ or a high price $\ph$ (these strategies are denoted $\rm L$ and $\rm H$). After the prices have been announced, the number of workers and customers joining the two platforms is given by Theorem~\ref{thm:routing}. This gives the throughput of the two platforms for any pair of strategies chosen by the platforms. Thus, we get a game in which each platform seeks to maximize its throughput. We denote the payoff (throughput) of Platform $i$ when Platform 1 plays $k$ and Platform 2 plays $\ell$ by $N_i^{k\ell}$, where $i\in \{1,2\}$ and $k,\ell \in \{ \rm L,H\}.$ 
The payoff matrix for the resulting game is given in Table~\ref{tab:payoff_structure}. Recall that from Theorem~\ref{thm:routing}, it follows that the throughput for Platform 2 is same as the throughput of Platform 1 scaled by factor $ \frac{1-\beta}{\beta}$.
\begin{table}[h]
	\begin{center}
		\setlength{\extrarowheight}{2pt}
		\begin{tabular}{cc|c|c|}
			& \multicolumn{1}{c}{} & \multicolumn{2}{c}{Platform $2$}\\
			& \multicolumn{1}{c}{} & \multicolumn{1}{c}{$\rm L$} & \multicolumn{1}{c}{$\rm H$} \\\cline{3-4}
			\multirow{2}*{Platform $1$} & $\rm L$ & $(N_1^{\rm LL},N_2^{\rm LL})$ & $(N_1^{\rm LH},N_2^{\rm LH})$ \\\cline{3-4}
			& $\rm H$ & $(N_1^{\rm HL},N_2^{\rm HL})$ & $(N_1^{\rm HH},N_2^{\rm HH})$ \\\cline{3-4}
		\end{tabular}
		\caption{Payoff matrix for the two platforms. }\label{tab:payoff_structure}
	\end{center}
\end{table}
From Theorem~\ref{thm:routing}, we obtain the following expression for the throughput of Platform 1 for different strategies chosen by the platforms. 
\begin{align}
N_1^{\rm LL} &= \min\{\beta c(\pl),\beta w( \gamma \pl) \},\label{eq:nll}\\
N_1^{\rm LH} &= \min \{\dfrac{\beta}{1-\beta}c(\ph), \beta c(\pl), w(\gamma \pl), \beta w( \gamma \ph) \},\label{eq:nlh}\\
N_1^{\rm HH} &= \min\{\beta c(\ph),\beta w( \gamma \ph) \},\label{eq:nhl}\\
N_1^{\rm HL} &= \min \{c(\ph), \beta c(\pl), \dfrac{\beta}{1-\beta}w(\gamma \pl), \beta w( \gamma \ph) \}\label{eq:nhh}.
\end{align}
Next, we characterize the Nash equilibria in pure strategies for this game and ascertain the impact of the loyalty $(\beta)$ on the same.
Depending on whether the demand and supply match at the prices $\ph$ or $\pl$, we separately analyse the following three cases -- 

\begin{enumerate}
	\item[1.] $\pl < \pb< \ph$.
	\item[2.] $\pl < \ph \leq \pb$.
	\item[3.] $\pb \leq \pl< \ph$.
\end{enumerate}

\subsection{$\pl < \pb< \ph$}
In this case, the market is strictly supply limited at $\pl$ and strictly demand limited at $\ph$. The succeeding theorem gives the equilibrium strategies for the platforms. From Assumption \ref{ass:demandsuppply}, we have $\beta w( \gamma \pl) \leq \beta c(\pl)$ and $\beta c(\ph) \leq \beta w( \gamma \ph).$ Therefore, $N_1^{\rm LL} = \beta w(\gamma \pl)$ and $N_1^{\rm HH} = \beta c(\ph)$. We find that the nature of the Nash equilbrium depends intimately on the ratio,
\begin{equation}
\rho:=\dfrac{\ch}{\wl}.
\end{equation}
\begin{theorem}[Equilibrium strategies]\label{thm:eq1}
	Let Assumption~\ref{ass:demandsuppply} hold and let $\pl < \pb< \ph$.
	Then the pure strategy Nash equilibria (NE) for the throughput maximization game between the two competing platforms are as follows. 
	\begin{enumerate}
		\item[1.] (L, L) is a NE iff $\max\{\rho,0\} \leq \beta \leq \min\{1 - \rho,1\}$.
		\item[2.] (L, H) is a NE iff $\max\{1- \rho,0\} \leq \beta \leq \min\{\dfrac{1}{\rho},1\}$.
		\item[3.] (H, L) is a NE iff $\max \{1- \dfrac{1}{\rho},0\} \leq \beta \leq \min\{\rho,1\}$.
		\item[4.] (H, H) is a NE iff $\max \{\dfrac{1}{\rho},0\} \leq \beta \leq \min\{1 - \dfrac{1}{\rho},1\}$.
	\end{enumerate}
\end{theorem}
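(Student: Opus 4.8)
The plan is to exploit the scaling relation noted just before the theorem: since the loyalty constraint forces $c_2^{*}=\frac{1-\beta}{\beta}c_1^{*}$ at every price profile, Theorem~\ref{thm:routing} gives $N_2^{k\ell}=\frac{1-\beta}{\beta}N_1^{k\ell}$ for all $k,\ell\in\{\mathrm{L},\mathrm{H}\}$. Because $\frac{1-\beta}{\beta}>0$, every best-response comparison for Platform~2 is equivalent to the same comparison written purely in terms of Platform~1's payoffs, so I would reduce the entire analysis to the four numbers $N_1^{\mathrm{LL}},N_1^{\mathrm{LH}},N_1^{\mathrm{HL}},N_1^{\mathrm{HH}}$. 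A profile is a pure NE iff neither platform gains from a unilateral switch, giving two inequalities per profile; for example $(\mathrm{L},\mathrm{L})$ is a NE iff $N_1^{\mathrm{LL}}\ge N_1^{\mathrm{HL}}$ (Platform~1 does not switch to $\mathrm{H}$) and $N_1^{\mathrm{LL}}\ge N_1^{\mathrm{LH}}$ (the symmetric condition for Platform~2 via the scaling), and analogously for the other three profiles.

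Next I would pin down the four payoffs under the standing case $\pl<\pb<\ph$. Strict monotonicity (Assumption~\ref{ass:demandsuppply}) together with the definition of $\pb$ yields $\wl<\cl$ and $\ch<\wh$, whence $N_1^{\mathrm{LL}}=\beta\wl$ and $N_1^{\mathrm{HH}}=\beta\ch$ as already recorded, and also $\cl>\ch$ and $\wh>\wl$. Writing $a:=\wl$, $b:=\ch$ and $\rho=b/a$, the expressions \eqref{eq:nlh} and \eqref{eq:nhh} read $N_1^{\mathrm{LH}}=\min\{\frac{\beta}{1-\beta}b,\ \beta\cl,\ a,\ \beta\wh\}$ and $N_1^{\mathrm{HL}}=\min\{\frac{\beta}{1-\beta}a,\ b,\ \beta\cl,\ \beta\wh\}$. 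The decisive observation, which I would isolate as the crux, is that the two ``cross'' terms $\beta\cl$ and $\beta\wh$ are each strictly larger than both $\beta a$ and $\beta b$, since $\cl>\max\{a,b\}$ and $\wh>\max\{a,b\}$. Consequently, in any comparison against $N_1^{\mathrm{LL}}=\beta a$ or $N_1^{\mathrm{HH}}=\beta b$ these cross terms can never be the binding element of a minimum, and they drop out of every no-deviation test.

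With the cross terms eliminated, each inequality collapses to a single comparison between an ``own'' term and $\beta a$ or $\beta b$. For instance $N_1^{\mathrm{LL}}\ge N_1^{\mathrm{HL}}$ holds iff some remaining term of $N_1^{\mathrm{HL}}$ is $\le\beta a$; the term $\frac{\beta}{1-\beta}a$ exceeds $\beta a$ for all $\beta\in(0,1)$, so the only possibility is $b\le\beta a$, i.e.\ $\beta\ge\rho$. Likewise $N_1^{\mathrm{LL}}\ge N_1^{\mathrm{LH}}$ reduces to $\frac{\beta}{1-\beta}b\le\beta a$, i.e.\ $\beta\le1-\rho$, giving the range $\rho\le\beta\le1-\rho$ for $(\mathrm{L},\mathrm{L})$. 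The other three profiles follow by identical bookkeeping: each of their two conditions reduces to one of the comparisons $\beta\gtrless\rho$, $\beta\gtrless1-\rho$, $\beta\gtrless\tfrac{1}{\rho}$, $\beta\gtrless1-\tfrac{1}{\rho}$, and intersecting the two relevant half-lines produces the four intervals stated.

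Finally I would clip each interval to the admissible range $\beta\in(0,1)$, which is precisely what the $\max\{\cdot,0\}$ and $\min\{\cdot,1\}$ in the statement encode: the lower endpoints $\rho$ and $\tfrac{1}{\rho}$ are automatically positive, while the upper endpoints $1-\rho$ and $1-\tfrac{1}{\rho}$ may require truncation at $1$ and become vacuous when $\rho\ge1$. The only genuinely delicate step is the crux above, namely verifying that $\beta\cl$ and $\beta\wh$ are dominated so the four-term minima collapse to their two price-ratio terms; once that is established, the rest is elementary linear-fractional algebra in $\beta$ and $\rho$.
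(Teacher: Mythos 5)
Your proposal is correct and follows essentially the same route as the paper's proof in Appendix A: reduce both platforms' no-deviation conditions to comparisons among $N_1^{\mathrm{LL}},N_1^{\mathrm{LH}},N_1^{\mathrm{HL}},N_1^{\mathrm{HH}}$ via the $\frac{1-\beta}{\beta}$ scaling, then use the monotonicity inequalities (your observation that $\beta\cl$ and $\beta\wh$ strictly dominate $\beta\wl$ and $\beta\ch$ is exactly the role of the paper's inequalities (30)--(33) and (36)--(38)) to collapse each four-term minimum to a single binding comparison in $\beta$ and $\rho$. No gaps.
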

The proof is in Appendix \ref{app:1}. 

Figure \ref{fig:equilibria_a} illustrates Theorem \ref{thm:eq1}. Observe that depending on the ratio of the lowest possible demand to the lowest possible supply $(\ch/\wl)$ we get the range of values of loyalty $(\beta)$ for which the various equilibria occur. Thus, Theorem \ref{thm:eq1} completely characterizes the Nash equilibrium for this case. It also shows that at least one pure strategy Nash equilibrium always exists for the game and gives the precise conditions for existence of multiple pure strategy equilibria for the game. 
When the customer loyalty for the two platforms is close to $1/2$, if the number of customers at the high price is significantly less than the number of workers at the low price, then (L, L) emerges as the Nash equilibrium. Consequently, the platforms operate at a low price which might be desirable to the customers. Conversely, if workers at the low price are fewer in number than the customers at the high price then (H, H) corresponds to the Nash equilibrium. 

\begin{figure}[h]
	\centering\includegraphics[scale=0.32]{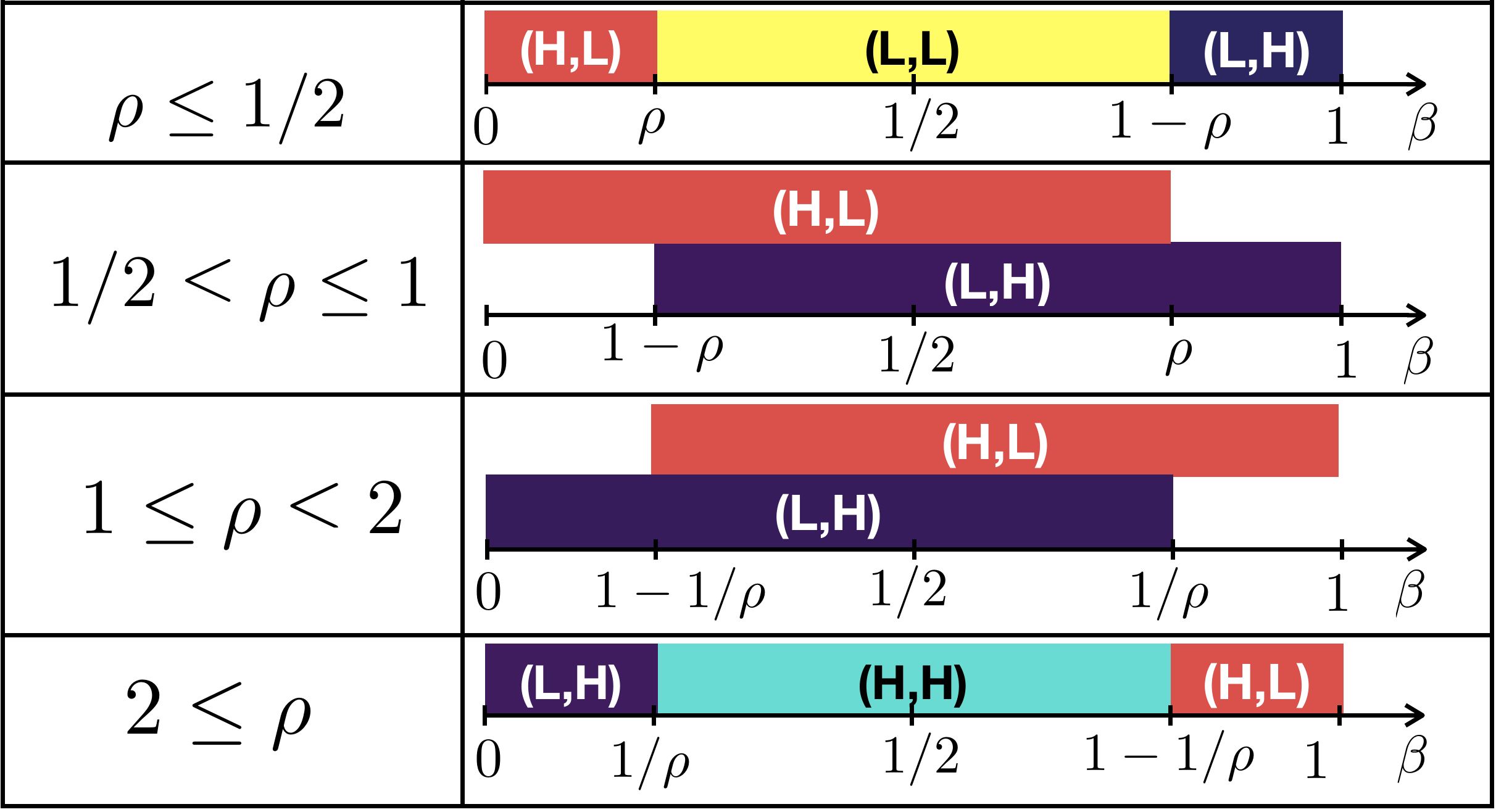}
	\caption{Dependence of pure strategy Nash equilibria for the throughput maximization game on customer loyalty $\beta$ and the ratio $\rho$ i.e., $\ch/\wl $.} \label{fig:equilibria_a}
\end{figure}

\subsection{$\pl < \ph \leq \pb$}
In this case, the market is supply limited at both $\pl$ and $\ph$. Observe that
\begin{align}
N_1^{\rm LL} &= \min\{\beta \cl,\beta \wl \}= \beta \wl, \label{eq:cll}\\
N_1^{\rm HH} &= \min\{\beta \ch,\beta \wh \}=\beta \wh,\label{eq:chh}\\
N_1^{\rm LH} &= \min \{\dfrac{\beta}{1-\beta}\ch, \beta \cl, \wl, \beta \wh \}\nonumber\\&=\min \{\wl, \beta \wh \},\label{eq:clh}\\
N_1^{\rm HL} &= \min \{\ch, \beta \cl, \dfrac{\beta}{1-\beta}\wl, \beta \wh \}\nonumber\\&=\min \{\dfrac{\beta}{1-\beta}\wl, \beta \wh \}\label{eq:chl}.
\end{align}
From Assumption \ref{ass:demandsuppply} and $0<\beta<1$, it follows that 
\begin{align}
&\beta \wh \leq \beta \ch < \dfrac{\beta}{1-\beta}\ch, \label{eq:ineqc1} \\
&\beta \wh \leq \beta \ch < \beta \cl, \label{eq:ineqc2} \\
&\beta \wh < \wh \leq \ch \label{eq:ineqc3}.
\end{align}
Here, Equation (\ref{eq:clh}) follows by combining Inequalities (\ref{eq:ineqc1}) and (\ref{eq:ineqc2}). Moreover, combining Inequalities (\ref{eq:ineqc2}) and (\ref{eq:ineqc3}) gives Equation (\ref{eq:chl}) 
Table \ref{tab:payoff_w} gives the payoff for Platform 1. 
\begin{table}[h]
	\begin{center}
		\setlength{\extrarowheight}{2pt}
		\begin{tabular}{cc|c|c|}
			& \multicolumn{1}{c}{} & \multicolumn{2}{c}{Platform $2$}\\
			& \multicolumn{1}{c}{} & \multicolumn{1}{c}{L} & \multicolumn{1}{c}{H} \\\cline{3-4}
			\multirow{2}*{Platform $1$} & L & $\beta \wl$ & $\min\{\wl, \beta \wh\}$ \\\cline{3-4}
			& H & $\min \{\dfrac{\beta}{1-\beta} \wl, \beta \wh \}$ & $\beta \wh$ \\\cline{3-4}
		\end{tabular}
		\caption {Payoff matrix for Platform 1 if $\pl < \ph \leq \pb$. Platform 1 is the row player, Platform 2 is the column player. The payoff for Platform 2 is same as the payoff of Platform 1 scaled by factor $ \frac{1-\beta}{\beta}$.}\label{tab:payoff_w}
	\end{center}
\end{table}
\begin{theorem}\label{thm:eq3}
	When $\pl < \ph \leq \pb$, (H, H) is always a Nash equilibrium and (L, L) is never an equilibrium. Moreover,
	\begin{enumerate}
		\item[1.] (L, H) is a NE iff $0 < \beta \leq \min\left \{1,\dfrac{\wl}{\wh}\right \}$.
		\item[2.] (H, L) is a NE iff $\max\left \{1-\dfrac{\wl}{\wh},0\right \} \leq \beta < 1$.
	\end{enumerate}
\end{theorem}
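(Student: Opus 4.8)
The plan is to treat this as a finite $2\times 2$ game and verify the best-response conditions cell by cell, using the Platform~1 entries of Table~\ref{tab:payoff_w} together with the scaled entries for Platform~2. First I would record Platform~2's payoffs by multiplying each entry of Table~\ref{tab:payoff_w} by $\frac{1-\beta}{\beta}$, obtaining $N_2^{\rm LL}=(1-\beta)\wl$, $N_2^{\rm HH}=(1-\beta)\wh$, $N_2^{\rm LH}=\min\{\frac{1-\beta}{\beta}\wl,(1-\beta)\wh\}$ and $N_2^{\rm HL}=\min\{\wl,(1-\beta)\wh\}$. I would also record the elementary inequalities that drive everything: since supply is strictly increasing (Assumption~\ref{ass:demandsuppply}(b)) we have $\wl<\wh$, and since $0<\beta<1$ we have $\frac{\beta}{1-\beta}\wl>\beta\wl$ and $\frac{1-\beta}{\beta}\wl>(1-\beta)\wl$. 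Together these show that every argument of the minima appearing in $N_1^{\rm HL}$ and $N_2^{\rm LH}$ strictly exceeds $N_1^{\rm LL}=\beta\wl$ and $N_2^{\rm LL}=(1-\beta)\wl$ respectively.

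With these in hand the two unconditional claims are immediate. For $({\rm H},{\rm H})$ neither player gains by deviating: Platform~1's alternative payoff $N_1^{\rm LH}=\min\{\wl,\beta\wh\}$ is at most $\beta\wh=N_1^{\rm HH}$, and symmetrically $N_2^{\rm HL}\le(1-\beta)\wh=N_2^{\rm HH}$, so $({\rm H},{\rm H})$ is always a NE. For $({\rm L},{\rm L})$ I would exhibit a profitable deviation for Platform~1: by the inequalities above, $N_1^{\rm HL}=\min\{\frac{\beta}{1-\beta}\wl,\beta\wh\}>\beta\wl=N_1^{\rm LL}$, so Platform~1 strictly prefers H and $({\rm L},{\rm L})$ is never a NE.

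The two conditional claims come from isolating, in each remaining cell, the single non-trivial best-response inequality. At $({\rm L},{\rm H})$, Platform~2's no-deviation condition $N_2^{\rm LH}\ge N_2^{\rm LL}$ holds automatically by the recorded inequalities, so the binding condition is Platform~1's: $N_1^{\rm LH}=\min\{\wl,\beta\wh\}\ge\beta\wh=N_1^{\rm HH}$, which holds exactly when $\wl\ge\beta\wh$, i.e. $\beta\le\wl/\wh$; intersecting with $0<\beta<1$ gives item~1. At $({\rm H},{\rm L})$ the roles are mirrored under $\beta\leftrightarrow1-\beta$: Platform~1's condition $N_1^{\rm HL}\ge N_1^{\rm LL}$ is the automatic one, and the binding condition is Platform~2's, $N_2^{\rm HL}=\min\{\wl,(1-\beta)\wh\}\ge(1-\beta)\wh=N_2^{\rm HH}$, which holds iff $\wl\ge(1-\beta)\wh$, i.e. $\beta\ge1-\wl/\wh$; intersecting with $0<\beta<1$ gives item~2.

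There is no serious obstacle here --- the argument is finite bookkeeping --- but the one point requiring care is correctly resolving the minima before comparing payoffs, since an incorrectly evaluated $\min$ would flip a best-response inequality. The clean way to organise this is exactly the step above: observe that in each off-diagonal cell one player's deviation leads to a strictly larger quantity (hence that player's constraint is vacuous), so only the \emph{other} player's constraint can bind, and that constraint always reduces to a single threshold comparison involving $\wl$ and $\beta\wh$ (resp.\ $(1-\beta)\wh$). The $\beta\leftrightarrow1-\beta$ symmetry between the $({\rm L},{\rm H})$ and $({\rm H},{\rm L})$ analyses means the second conditional claim needs no independent computation once the first is done.
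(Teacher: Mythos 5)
Your proof is correct and follows essentially the same route as the paper: the paper packages the identical cell-by-cell best-response verification as a general lemma (Lemma~\ref{lemma:lem_throughput}, with generic payoffs $a_1\le a_2$, which it reuses for Theorem~\ref{thm:eq2}) and then instantiates $a_1=\wl$, $a_2=\wh$, whereas you carry out the same comparisons directly on Table~\ref{tab:payoff_w}. All your evaluations of the minima and the resulting threshold conditions match the paper's.
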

The proof is in Appendix \ref{app:2}. 

\subsection{$\pb \leq \pl< \ph$}
This case occurs when both platform announce prices higher than $ \pb $ and witness a demand shortage. Observe that, 
\begin{align}
N_1^{\rm LL} &= \min\{\beta \cl,\beta \wl \}=\beta \cl, \label{eq:bll}\\
N_1^{\rm HH} &= \min\{\beta \ch,\beta \wh \}=\beta \ch,\label{eq:bhh}.
\end{align}
From Assumption \ref{ass:demandsuppply} and $0<\beta<1$, it follows that 
\begin{align}
&\beta \cl < \cl \leq \wl, \label{eq:ineqb1} \\
&\beta \cl \leq \beta \wl < \beta \wh, \label{eq:ineqb2} \\
&\beta \cl \leq \beta \wl < \dfrac{\beta}{1-\beta} \wl \label{eq:ineqb3}.
\end{align} 
Thus, $N_1^{\rm LH}$ and $N_1^{\rm HL}$ can be rewritten as
\begin{align}
N_1^{\rm LH} &= \min \{\dfrac{\beta}{1-\beta}\ch, \beta \cl, \wl, \beta \wh \}\nonumber\\&=\min \{\dfrac{\beta}{1-\beta}\ch, \beta \cl\},\label{eq:blh}\\
N_1^{\rm HL} &= \min \{\ch, \beta \cl, \dfrac{\beta}{1-\beta}\wl, \beta \wh \}\nonumber\\&=\min \{\ch, \beta \cl\} \label{eq:bhl}.
\end{align}
The payoff matrix for Platform 1 is given by Table \ref{tab:payoff_c}. 
\begin{table}[h]
	\begin{center}
		\setlength{\extrarowheight}{2pt}
		\begin{tabular}{cc|c|c|}
			& \multicolumn{1}{c}{} & \multicolumn{2}{c}{Platform $2$}\\
			& \multicolumn{1}{c}{} & \multicolumn{1}{c}{L} & \multicolumn{1}{c}{H} \\\cline{3-4}
			\multirow{2}*{Platform $1$} & L & $\beta \cl$ & $\min \{\dfrac{\beta}{1-\beta}\ch, \beta \cl \}$ \\\cline{3-4} 
			& H & $\min \{\ch, \beta \cl\}$ & $\beta \ch$ \\\cline{3-4}
		\end{tabular}
		\caption {Payoff matrix for Platform 1 if $\pb \leq \pl< \ph$. The payoff for Platform 2 is same as the payoff of Platform 1 scaled by factor $ \frac{1-\beta}{\beta}$.}\label{tab:payoff_c}
	\end{center}
\end{table}
\begin{theorem}\label{thm:eq2}
	When $\ \pb \leq \pl < \ph$, (L, L) is always a Nash equilibrium and (H, H) is never an equilibrium. Moreover,\begin{enumerate}
		\item[1.] (L, H) is a NE iff $\max\{1-\dfrac{\ch}{\cl},0\} \leq \beta < 1$.
		\item[2.] (H, L) is a NE iff $0 < \beta \leq \min\{1,\dfrac{\ch}{\cl}\}$.
	\end{enumerate}
\end{theorem}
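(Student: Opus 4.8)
The plan is to reduce the four Nash-equilibrium questions to inequality comparisons on a single payoff matrix. Since Theorem~\ref{thm:routing} gives $N_2^{k\ell} = \frac{1-\beta}{\beta} N_1^{k\ell}$ with $\frac{1-\beta}{\beta} > 0$ throughout $0<\beta<1$, Platform 2's best-response problem along each row of Table~\ref{tab:payoff_c} is equivalent to maximizing $N_1$ along that row. Hence a profile $(k,\ell)$ is a pure NE if and only if $N_1^{k\ell}$ is weakly largest in its column (so the row player, Platform 1, cannot improve) and weakly largest in its row (so the column player, Platform 2, cannot improve). I would first state this reduction explicitly, so that all that remains is to compare the four entries $\beta\cl$, $\min\{\frac{\beta}{1-\beta}\ch,\beta\cl\}$, $\min\{\ch,\beta\cl\}$, $\beta\ch$ of Table~\ref{tab:payoff_c}.

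Next I would collect the orderings I need. From Assumption~\ref{ass:demandsuppply} and $\pl<\ph$ the demand curve is strictly decreasing, so $\ch<\cl$; together with $0<\beta<1$ this yields $\beta\ch<\ch$ and $\beta\ch<\beta\cl$, and also the two observations that both $N_1^{\rm LH}$ and $N_1^{\rm HL}$ are minima taken against $\beta\cl$, hence each is at most $\beta\cl=N_1^{\rm LL}$. The inequalities (\ref{eq:ineqb1})--(\ref{eq:ineqb3}) already displayed cover everything else.

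With these in hand the four claims follow by inspection. For $(\rm L,\rm L)$: both off-diagonal neighbours of $N_1^{\rm LL}$ in its row and column, namely $N_1^{\rm LH}$ and $N_1^{\rm HL}$, are minima against $\beta\cl$ and so do not exceed $N_1^{\rm LL}=\beta\cl$; neither player can improve, so $(\rm L,\rm L)$ is always a NE. For $(\rm H,\rm H)$: $N_1^{\rm HH}=\beta\ch$ is strictly smaller than both $\ch$ and $\beta\cl$, hence strictly smaller than $N_1^{\rm HL}=\min\{\ch,\beta\cl\}$, so Platform 2 strictly gains by switching from $\rm H$ to $\rm L$; thus $(\rm H,\rm H)$ is never a NE. For $(\rm L,\rm H)$ the condition $N_1^{\rm LH}\ge N_1^{\rm HH}$ holds automatically (both terms of the min exceed $\beta\ch$), so the only binding requirement is $N_1^{\rm LH}\ge N_1^{\rm LL}$; since $N_1^{\rm LH}\le\beta\cl$ with equality exactly when $\frac{\beta}{1-\beta}\ch\ge\beta\cl$, this collapses to the threshold $\beta\ge 1-\frac{\ch}{\cl}$, which with $0<\beta<1$ gives claim~1. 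Symmetrically, for $(\rm H,\rm L)$ the condition $N_1^{\rm HL}\ge N_1^{\rm HH}$ is automatic and the binding requirement $N_1^{\rm HL}\ge N_1^{\rm LL}$ reduces to $\ch\ge\beta\cl$, i.e. $\beta\le\frac{\ch}{\cl}$, giving claim~2.

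The whole argument is a finite case check, so there is no deep obstacle; the only place demanding care is the bookkeeping of the $\min$ expressions --- identifying which branch is active and converting the two inequalities $\frac{\beta}{1-\beta}\ch\ge\beta\cl$ and $\ch\ge\beta\cl$ into the clean loyalty thresholds $1-\ch/\cl$ and $\ch/\cl$. I would also note at the outset that $\ch<\cl$ forces $0<\ch/\cl<1$, which is what makes the $\min\{1,\cdot\}$ and $\max\{\cdot,0\}$ in the statement reduce to the bare thresholds and guarantees the stated intervals are nonempty.
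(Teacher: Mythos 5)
Your proof is correct, and every individual comparison checks out: the reduction to ``$N_1^{k\ell}$ weakly largest in its row and in its column'' via the positive scaling $N_2^{k\ell}=\frac{1-\beta}{\beta}N_1^{k\ell}$ is exactly the right way to turn the NE question into entrywise inequalities on Table~\ref{tab:payoff_c}, and the four case analyses (including the observation that $\beta\ch<\min\{\ch,\beta\cl\}$ kills (H,\,H), and that the two off-diagonal entries are minima against $\beta\cl$ so (L,\,L) always survives) match the truth. The route differs from the paper's in packaging rather than substance: the paper proves a general Lemma~\ref{lemma:lem_throughput} about the abstract payoff matrix with entries $\beta a_1$, $\min\{a_1,\beta a_2\}$, $\min\{\frac{\beta}{1-\beta}a_1,\beta a_2\}$, $\beta a_2$ for $a_1\le a_2$, uses it to prove Theorem~\ref{thm:eq3} directly, and then obtains Theorem~\ref{thm:eq2} by the substitution $S_1=\mathrm{H}$, $S_2=\mathrm{L}$, $a_1=\ch$, $a_2=\cl$ together with the relabeling $\beta\mapsto 1-\beta$ and a rescaling by $\frac{\beta}{1-\beta}$. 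The internal computations of that lemma are the same comparisons you perform, so nothing is gained or lost mathematically; the paper's abstraction buys a single argument covering both the supply-limited and demand-limited cases and makes the H/L duality explicit, while your direct verification is more self-contained and avoids having to track the somewhat delicate relabeling (which the paper's appendix states rather tersely). Your closing remark that $0<\ch/\cl<1$ collapses $\max\{1-\ch/\cl,0\}$ and $\min\{1,\ch/\cl\}$ to the bare thresholds is a worthwhile point that the paper leaves implicit.
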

The proof is in Appendix \ref{app:3}.

Figures \ref{fig:equilibria_2}(a) and \ref{fig:equilibria_2}(b) respectively illustrate Theorems \ref{thm:eq2} and \ref{thm:eq3}. When $\pl=\pb$, the payoffs for the case when $\pb \leq \pl< \ph$ become a limiting case for the payoffs for the $\pl < \pb< \ph$ case. Likewise, when $\pb=\ph$, the payoffs for $\pl < \ph \leq \pb$ can be viewed as a limiting case of $\pl < \pb< \ph$. An interesting consequence of this theorem is the creation of new pure strategy equilibria. This can be visualised by comparing Figures \ref{fig:equilibria_a} and \ref{fig:equilibria_2}.
\begin{figure}[h]
	\centering\includegraphics[scale=0.35]{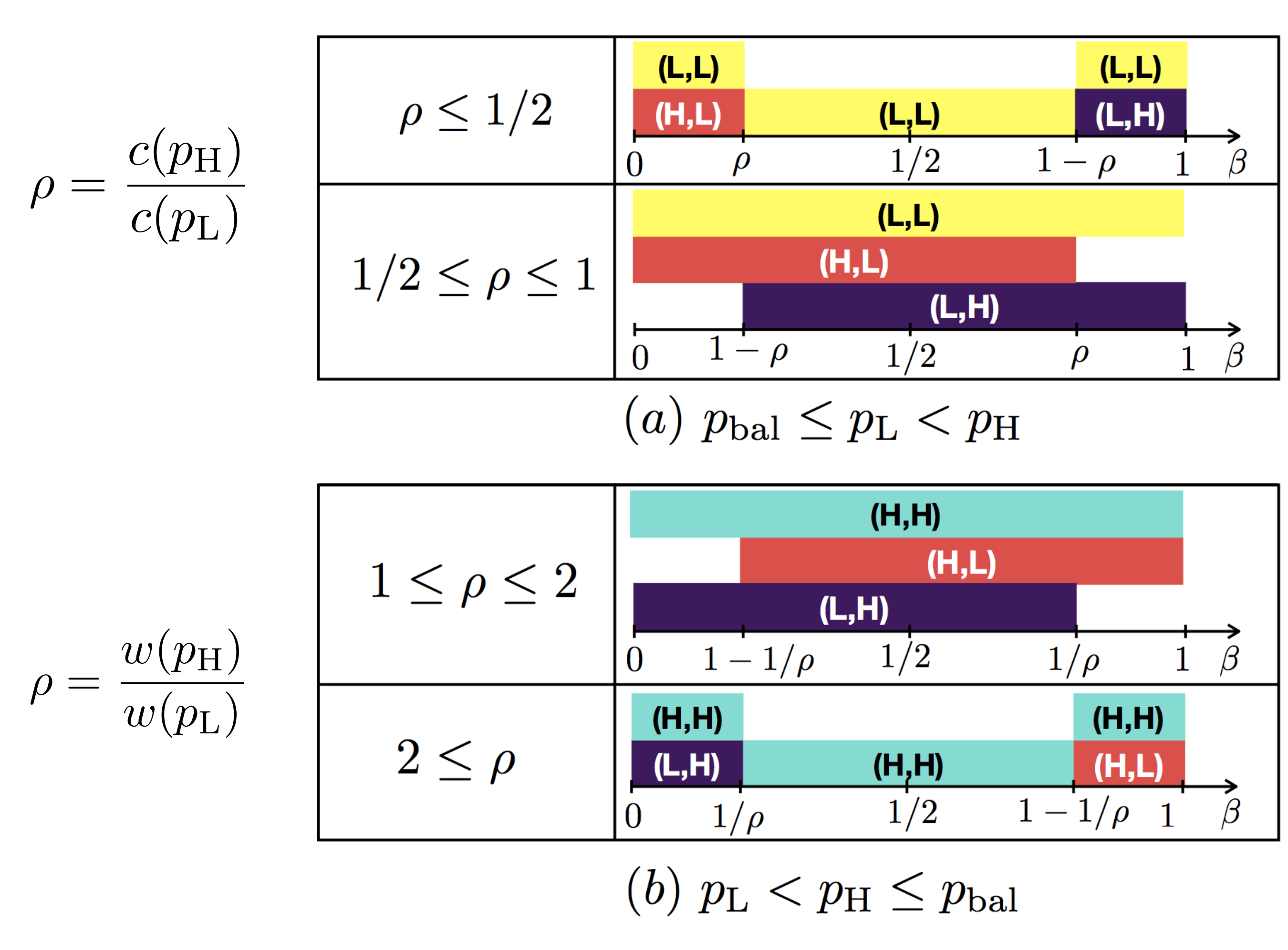}
	\caption{Dependence of Nash equilibria for the throughput maximization game on customer loyalty $\beta$ for {(a) $\pb \leq \pl < \ph$ and (b) $\pl < \ph \leq \pb$.}} \label{fig:equilibria_2}
\end{figure}

\section{Equilibrium strategies for revenue competition }\label{sec:eq_revenue} 
In this section, we analyse the revenue competition game between the two platforms. Recall that in each transaction at Platform $i$, a customer pays an amount $p_i$ and the platform retains a fraction $(1-\gamma)p_i$. Thus the revenue of Platform $i$ when Platform 1 plays $k$ and Platform 2 plays $\ell$ is given by $(1-\g)p_i N_i^{k\ell}$, where $i\in \{1,2\}$ and $k,\ell \in \{ \rm L,H\}.$ It is important to note that the revenue not only depends on the throughput but also the price per transaction. A high (resp., low) price fetches more workers (resp., customers) but deters customers (resp., workers). We observe that in addition to $\rho$, i.e., the fraction $\ch/ \wl$, the revenue intimately depends on the fractions defined below. 
\begin{align}
f &:=\dfrac{\pl}{\ph},\\
\rw&:=\dfrac{\wl}{\wh},\\
\rc&:=\dfrac{\ch}{\cl}.
\end{align}
Using Assumption \ref{ass:demandsuppply}, we have $0<f, \rw, \rc<1$ and $0<\rho$. 

\begin{remark}\label{remark:hh}
If (H, H) is a N.E. for the throughput competition game, then it must be a N.E. for the revenue competition game. Conversely, if (L, L) is a N.E. for the revenue competition game, then it must be a N.E. for the throughput competition game. Moreover, if $H$ is a dominant strategy for the throughput game, then $H$ is a dominant strategy for the revenue game. If $L$ is a dominant strategy for the revenue game then $L$ is also a dominant strategy for the throughput game.
\end{remark}
If (H, H) is a N.E. for the throughput competition game, then Platform 1 does not have an incentive to deviate and thus, $N_1^{\rm HH}\geq N_1^{\rm LH}$ which implies $\ph N_1^{\rm HH} \geq \pl N_1^{\rm LH}$. Likewise, $\ph N_2^{\rm HH} \geq \pl N_2^{\rm HL}$ and thus in this case, (H, H) is also a N.E. for the revenue competition game.  Along similar lines, it can be argued that if (L, L) is a N.E. for the revenue competition game, then it is also an equilibrium for the throughput competition game. If (L, L) is a N.E. for the revenue game then for $i=1,2$, $\pl N_i^{\rm LL} \geq \ph N_i^{\rm HL}$ which gives $N_i^{\rm LL} \geq \ph/\pl N_i^{\rm HL} > N_i^{\rm HL}$.
\par  In order to determine the equilibrium strategies for the revenue competition, we separately analyse the following three cases depending on the ordering of $\ph$ and $\pl$ with respect to $\pb$. 
\begin{enumerate}
\item[1.] $\pl < \ph \leq \pb,$
\item[2.] $\pb \leq \pl< \ph,$
\item[3.] $\pl < \pb< \ph.$
\end{enumerate}
For the remainder of this section, we refer to the revenue generated by a platform as the platform's payoff.

\subsection{$\pl < \ph \leq \pb$}
This case corresponds to a supply limited market in which both platforms announce prices lower than the market clearing price $\pb$. The entries of the payoff matrix for revenue for Platform $i$ can be obtained by multiplying the entries of the payoff matrix for the throughput game in Table \ref{tab:payoff_w} with a fraction $(1-\gamma)$ of the price that Platform $i$ charges to the customers, where $i=1,2$. For instance, if Platform 1 and Platform 2 respectively announce prices $\pl$ and $\ph$, then the throughput at Platform 1 is $\min \{ \wl, \beta \wh\}$. Since, each platform retains a fraction $(1-\gamma)$ of the price paid by the customer and passes on the rest to the worker, the revenue for Platform 1 is $(1-\gamma)\pl \min \{ \wl, \beta \wh\}$. Recall that from Theorem ~\ref{thm:routing}, the throughput at Platform 2 is $\frac{1-\beta}{\beta}$ times the throughput at Platform 1. Therefore when Platform 1 and Platform 2 play (L, H), then the throughput of Platform 2 is $\frac{1-\beta}{\beta}\min \{ \wl, \beta \wh\}$ and the revenue at Platform 2 is $\frac{(1-\beta)(1-\gamma) \ph}{\beta}\min \{ \wl, \beta \wh\}$. The resulting payoff matrix for Platform 1 (resp., Platform 2) is given respectively by Table \ref{tab:rev_payoff1_w} (resp., Table \ref{tab:rev_payoff2_w}). For the purpose of equilibrium analysis, all the entries in Table~\ref{tab:rev_payoff1_w} (resp., Table \ref{tab:rev_payoff2_w}) corresponds to the revenue of Platform 1 (resp., Platform 2) scaled by a factor $\frac{1}{1-\g}$ (resp., $\frac{(1-\beta)(1-\gamma)}{\beta}$). Since $\frac{1}{1-\g}>0$ (resp., $\frac{(1-\beta)(1-\gamma)}{\beta}$), the equilibria are unaltered under this monotonic transformation and these factors have been dropped for simplifying the payoffs without altering the equilibrium strategies.
 \begin{table}[h]
  \begin{center}
    \setlength{\extrarowheight}{2pt}
    \begin{tabular}{cc|c|c|}
      & \multicolumn{1}{c}{} & \multicolumn{2}{c}{Platform $2$}\\
      & \multicolumn{1}{c}{} & \multicolumn{1}{c}{L}  & \multicolumn{1}{c}{H} \\\cline{3-4}
      \multirow{2}*{Platform $1$}  & L & $\pl \beta \wl$ & $\pl \min\{\wl, \beta \wh\}$ \\\cline{3-4}
      & H & $\ph \min \{\dfrac{\beta}{1-\beta} \wl, \beta \wh \}$ & $\ph \beta \wh$ \\\cline{3-4}
    \end{tabular}
    \caption {Payoff matrix for Platform 1 for the case $\pl < \ph \leq \pb$ scaled by a factor $\frac{1}{1-\g}$. }\label{tab:rev_payoff1_w}
    \end{center}
  \end{table}
 \begin{table}[h]
  \begin{center}
    \setlength{\extrarowheight}{2pt}
    \begin{tabular}{cc|c|c|}
      & \multicolumn{1}{c}{} & \multicolumn{2}{c}{Platform $2$}\\
      & \multicolumn{1}{c}{} & \multicolumn{1}{c}{L}  & \multicolumn{1}{c}{H} \\\cline{3-4}
      \multirow{2}*{Platform $1$}  & L & $\pl \beta \wl$ & $\ph \min\{\wl, \beta \wh\}$ \\\cline{3-4}
      & H & $\pl \min \{\dfrac{\beta}{1-\beta} \wl, \beta \wh \}$ & $\ph \beta \wh$ \\\cline{3-4}
    \end{tabular}
    \caption {Payoff matrix for Platform 2 for the case $\pb \leq \pl< \ph$ scaled by a factor $\frac{\beta}{(1-\g) (1-\beta)}$. }    \label{tab:rev_payoff2_w}
    \end{center}
  \end{table} 
\par For a platform operating in a supply limited market, announcing a higher price to workers will increase throughput while fetching more revenue per transaction. From Remark~\ref{remark:hh} and Theorem~\ref{thm:eq3}, we expect (H, H) to be a N.E. for this case. It turns out that (H, H) is the only N.E. for this case.
\begin{theorem}\label{thm:eqw}
When $\pl < \ph \leq \pb$, (H, H) is the unique Nash equilibrium for the revenue competition game (see Table~\ref{tab:NEwrev}).
\end{theorem}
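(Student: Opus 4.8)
The plan is to show that in this supply-limited regime the high price $\ph$ is a \emph{strictly dominant strategy} for each platform, which immediately forces (H, H) to be the unique pure-strategy Nash equilibrium and rules out the other three profiles simultaneously. I would work directly with the scaled payoff tables (Tables~\ref{tab:rev_payoff1_w} and~\ref{tab:rev_payoff2_w}); since each entry is Platform $i$'s revenue multiplied by a strictly positive constant, this monotone rescaling preserves the set of equilibria, so it suffices to compare the H-row against the L-row (resp.\ H-column against L-column) entry by entry.

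First I would record two auxiliary inequalities that control the $\min$ terms:
\[
\min\Big\{\tfrac{\beta}{1-\beta}\wl,\ \beta\wh\Big\}\ \geq\ \beta\wl,
\qquad
\min\{\wl,\ \beta\wh\}\ \geq\ \beta\wl .
\]
Both follow from $0<\beta<1$ together with Assumption~\ref{ass:demandsuppply}(b), which gives $\wl<\wh$: in each minimum both arguments are at least $\beta\wl$ (indeed $\tfrac{\beta}{1-\beta}\wl>\beta\wl$, $\wl>\beta\wl$, and $\beta\wh>\beta\wl$), so the minimum itself is at least $\beta\wl$.

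Next I would verify dominance for Platform 1 column by column. When Platform 2 plays L, the choice is between the (L,L) payoff $\pl\beta\wl$ and the (H,L) payoff $\ph\min\{\tfrac{\beta}{1-\beta}\wl,\beta\wh\}$; the first auxiliary inequality together with $\ph>\pl$ (Assumption~\ref{ass:demandsuppply}(c)) yields $\ph\min\{\tfrac{\beta}{1-\beta}\wl,\beta\wh\}\ge\ph\beta\wl>\pl\beta\wl$, so H strictly wins. When Platform 2 plays H, the choice is between the (L,H) payoff $\pl\min\{\wl,\beta\wh\}$ and the (H,H) payoff $\ph\beta\wh$; since $\min\{\wl,\beta\wh\}\le\beta\wh$ and $\pl<\ph$ we get $\pl\min\{\wl,\beta\wh\}\le\pl\beta\wh<\ph\beta\wh$, so again H strictly wins. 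The computation for Platform 2 is identical after reading off its off-diagonal entries from Table~\ref{tab:rev_payoff2_w}: against Platform 1 playing L one compares $\ph\min\{\wl,\beta\wh\}$ with $\pl\beta\wl$, and against Platform 1 playing H one compares $\ph\beta\wh$ with $\pl\min\{\tfrac{\beta}{1-\beta}\wl,\beta\wh\}$, both won by H through the same two inequalities and the gap $\pl<\ph$.

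Since $\ph$ strictly dominates $\pl$ for both platforms, deleting the dominated strategy for each player leaves only the profile (H, H), which is therefore the unique Nash equilibrium; in particular (L, L), (L, H) and (H, L) are all excluded. I do not anticipate a substantive obstacle here—the only care required is bookkeeping of the $\min$ expressions—and the crucial point is that strictness of the dominance comes precisely from the strict price gap $\pl<\ph$, which upgrades each weak throughput comparison into a strict revenue comparison. As a consistency check one could alternatively combine Remark~\ref{remark:hh} with Theorem~\ref{thm:eq3} to see that (H, H) is an equilibrium while (L, L) is not, but the dominance argument has the advantage of disposing of all four profiles at once.
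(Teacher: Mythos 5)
Your proof is correct, and it takes a genuinely different (and more economical) route than the paper's. The paper's Appendix D proof is a profile-by-profile verification: it rewrites the best-response conditions for each of (H,H), (L,L), (L,H), (H,L) in terms of $f=\pl/\ph$ and $\rw=\wl/\wh$, shows the (H,H) conditions always hold, shows the (L,L) conditions contradict $f<1$, and shows the (L,H) and (H,L) conditions each force $f\geq 1$. You instead prove that $\ph$ strictly dominates $\pl$ for both platforms, which disposes of all four profiles in one stroke and yields uniqueness by iterated elimination. The two key comparisons you use — $\ph\min\{\tfrac{\beta}{1-\beta}\wl,\beta\wh\}\geq\ph\beta\wl>\pl\beta\wl$ and $\pl\min\{\wl,\beta\wh\}\leq\pl\beta\wh<\ph\beta\wh$ (and their Platform-2 counterparts) — are exactly the inequalities the paper's cases (i) and (ii) reduce to, so the substance is shared, but your organization makes the cases (iii) and (iv) of the paper redundant. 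Interestingly, the paper itself states the dominance fact in its Discussion section (``Playing high price is a strictly dominant strategy for both platforms'') without using it in the appendix proof; your write-up supplies the argument behind that remark. What the paper's longer route buys is the explicit intermediate conditions in terms of $f$ and $\rw$, which it reuses for the parallel analyses of the other price regimes; what your route buys is brevity and a structural explanation (strict dominance) for why the equilibrium is unique. One small point of care: your auxiliary inequalities actually hold strictly, which you correctly do not need since the strictness of the final comparisons is already supplied by $\pl<\ph$.
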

    \begin{table}[h]
 {\color{black}
  \begin{center}
    \setlength{\extrarowheight}{2pt}
    \begin{tabular}{cc|c|c|}
      & \multicolumn{1}{c}{} & \multicolumn{2}{c}{}\\
      & \multicolumn{1}{c}{} & \multicolumn{1}{c}{L}  & \multicolumn{1}{c}{H} \\\cline{3-4}
      \multirow{2}*{}  & L & 
      Never & 
      Never \\\cline{3-4} 
      & H & 
      Never & 
      Always \\\cline{3-4}
    \end{tabular}
    \caption {Conditions for existence of N.E. for revenue game when $\pl <\ph \leq \pb$. Platform 1 is the row player and Platform 2 is the column player. }    \label{tab:NEwrev}
    \end{center}}
  \end{table}
  Proof is in Appendix \ref{app:4}.
  


\subsection{$\pb \leq \pl< \ph$}
This case corresponds to a high price regime in which the market is demand limited. As before, the revenue of each platform is obtained by scaling the throughput given in Table \ref{tab:payoff_c} by the price announced by the platform and a factor $\frac{1}{(1-\gamma)}$ (resp., $\frac{(1-\beta)(1-\gamma)}{\beta}$) for Platform 1 (resp., Platform 2). The payoff matrix for Platform 1 is given by Table \ref{tab:rev_payoff1_c} and that of Platform 2 is given by Table \ref{tab:rev_payoff2_c}. Recall that $\rho_{\rm c}$ and $f$  respectively denote $\frac{\ch}{\cl}$ and $\frac{\pl}{\ph}$.
 \ Observe that $\frac{\rho_{\rm c}}{f}$ corresponds to the ratio of revenue at high price to that at low price in the presence of a single platform in the market.
\begin{table}[h]
  \begin{center}
    \setlength{\extrarowheight}{2pt}
    \begin{tabular}{cc|c|c|}
      & \multicolumn{1}{c}{} & \multicolumn{2}{c}{Platform $2$}\\
      & \multicolumn{1}{c}{} & \multicolumn{1}{c}{L}  & \multicolumn{1}{c}{H} \\\cline{3-4}
      \multirow{2}*{Platform $1$}  & L & $ \pl \beta \cl$ & $ \pl \min \{\dfrac{\beta}{1-\beta}\ch, \beta \cl \}$ \\\cline{3-4} 
      & H & $ \ph \min \{\ch, \beta \cl\}$ & $ \ph \beta \ch$ \\\cline{3-4}
    \end{tabular}
    \caption {Payoff matrix for Platform 1 for the case $\pb \leq \pl< \ph$ scaled by a factor $\dfrac{1}{1-\g}$. }\label{tab:rev_payoff1_c}
    \end{center}
  \end{table}
  
 \begin{table}[h]
  \begin{center}
    \setlength{\extrarowheight}{2pt}
    \begin{tabular}{cc|c|c|}
      & \multicolumn{1}{c}{} & \multicolumn{2}{c}{Platform $2$}\\
      & \multicolumn{1}{c}{} & \multicolumn{1}{c}{L}  & \multicolumn{1}{c}{H} \\\cline{3-4}
      \multirow{2}*{Platform $1$}  & L & $ \pl \beta \cl$ & $ \ph \min \{\dfrac{\beta}{1-\beta}\ch, \beta \cl \}$ \\\cline{3-4} 
      & H & $ \pl \min \{\ch, \beta \cl\}$ & $ \ph \beta \ch$ \\\cline{3-4}
    \end{tabular}
    \caption {Payoff matrix for Platform 2 for the case $\pb \leq \pl< \ph$ scaled by a factor $\dfrac{\beta}{(1-\g) (1-\beta)}$. }    \label{tab:rev_payoff2_c}
    \end{center}
  \end{table}
We now write the conditions under which different equilibria arise for the revenue competition game.
\\ \noindent(i) (H, H) is a N.E. iff
\begin{align}
& \ph \beta \ch \geq \pl \min \{ \dfrac{\beta}{1-\beta}\ch,\beta \cl\},\\
& \ph \beta \ch \geq \pl \min \{ \ch,\beta \cl \}
\end{align}
Substituting, $\beta, \rho_{\rm c}$ and $f$,
\begin{align}
&  \beta \rho_{\rm c} \geq f \min \{ \dfrac{\beta}{1-\beta}\rho_{\rm c},\beta \},\\
&  \beta \rho_{\rm c} \geq f \min \{ \rho_{\rm c},\beta \}
\end{align}
Dividing by $\beta \rho_{\rm c}$, we get that (H, H) is a N.E. iff
\begin{align}
&   \dfrac{1}{f} \geq  \min \{ \dfrac{1}{1-\beta},\dfrac{1}{\rho_{\rm c}} \},\\
&   \dfrac{1}{f} \geq  \min \{ \dfrac{1}{\beta},\dfrac{1}{\rho_{\rm c}} \}
\end{align}
\\ \noindent(ii) (L, L) is a N.E. iff
\begin{align}
& \pl \beta \cl \geq \ph \min \{ \ch,\beta \cl \},\\
& \pl \beta \cl \geq \ph \min \{ \dfrac{\beta}{1-\beta}\ch,\beta \cl\}
\end{align}
Substituting $\rho_{\rm c}, \beta$ and $f$,
\begin{align}
& f \beta  \geq  \min \{ \rho_{\rm c},\beta  \},\\
& f \beta  \geq  \min \{ \dfrac{\beta}{1-\beta}\rho_{\rm c},\beta \}
\end{align}
Dividing by $\beta$,  we see that (L, L) is a N.E. iff
\begin{align}
& f   \geq  \min \{  \dfrac{\rho_{\rm c}}{\beta}, 1 \},\\
& f   \geq  \min \{ \dfrac{\rho_{\rm c}}{1-\beta},1 \}
\end{align}
\noindent(iii) (L, H) is a N.E. iff
\begin{align}
& \pl \min\{ \dfrac{\beta}{1-\beta}\ch,\beta\cl \} \geq \ph \beta \ch, \\
& \ph \min\{ \dfrac{\beta}{1-\beta}\ch,\beta\cl \} \geq \pl \beta \cl.
\end{align}
Substituting $\rho_{\rm c}, \beta, f$,
\begin{align}
& f \min\{ \dfrac{\beta}{1-\beta}\rho_{\rm c},\beta \} \geq  \beta \rho_{\rm c}, \label{eq:revct3} \\
&  \min\{ \dfrac{\beta}{1-\beta}\rho_{\rm c},\beta \} \geq f \beta \label{eq:revct4}.
\end{align}
Rewriting the above inequalities after multiplying (\ref{eq:revct3}) (resp., (\ref{eq:revct4})) with a factor $\dfrac{1}{f \beta \rho_{\rm c}}$ (resp., $\dfrac{1}{\beta}$), 
\begin{align}
&  \min\{ \dfrac{1}{1-\beta},\dfrac{1}{\rho_{\rm c}} \} \geq   \dfrac{1}{f},  \label{eq:revct1}\\
&  \min\{ \dfrac{\rho_{\rm c}}{1-\beta},1 \} \geq f  \label{eq:revct2} .
\end{align}
\noindent(iv) 
Since the two platforms only differ in their respective customer loyalties, the conditions under which (H, L) is a N.E. can be obtained from the conditions for (L, H) to be a N.E. by replacing the factor $1-\beta$ with $\beta$ in equations (\ref{eq:revct1}) and  (\ref{eq:revct2}). Consequently, (H, L) is a N.E. iff
\begin{align}
&  \min\{ \dfrac{1}{\beta},\dfrac{1}{\rho_{\rm c}} \} \geq   \dfrac{1}{f}, \\
&  \min\{ \dfrac{\rho_{\rm c}}{\beta},1 \} \geq f  .
\end{align}

Table~\ref{tab:NEc} summarizes the conditions for the existence of N.E. for $\pb \leq \pl< \ph$ where $f$ lies in $(0,1)$. Here, the entry under row $k$ and column $\ell$ gives the necessary and sufficient conditions for existence of N.E. for the revenue competition game at $(k, \ell)$.
 \begin{table}[h]
 {\color{black}
  \begin{center}
    \setlength{\extrarowheight}{2pt}
    \begin{tabular}{cc|c|c|}
      & \multicolumn{1}{c}{} & \multicolumn{2}{c}{}\\
      & \multicolumn{1}{c}{} & \multicolumn{1}{c}{L}  & \multicolumn{1}{c}{H} \\\cline{3-4}
      \multirow{2}*{}  & L & $f   \geq  \min \{  \dfrac{\rho_{\rm c}}{\beta}, 1 \},\min \{ \dfrac{\rho_{\rm c}}{1-\beta},1 \}$ & $ \dfrac{1}{f} \leq \min\{ \dfrac{1}{1-\beta},\dfrac{1}{\rho_{\rm c}} \}, f \leq \min\{ \dfrac{\rho_{\rm c}}{1-\beta},1 \}  $ \\\cline{3-4} 
      & H & $ \dfrac{1}{f} \leq \min\{ \dfrac{1}{\beta},\dfrac{1}{\rho_{\rm c}} \}, f \leq \min\{ \dfrac{\rho_{\rm c}}{\beta},1 \}  $ & $ \dfrac{1}{f} \geq  \min \{ \dfrac{1}{1-\beta},\dfrac{1}{\rho_{\rm c}} \},  \min \{ \dfrac{1}{\beta},\dfrac{1}{\rho_{\rm c}} \}$ \\\cline{3-4}
    \end{tabular}
    \caption {Conditions for existence of N.E. for revenue competition game when $\pb \leq \pl< \ph$. Platform 1 is the row player and Platform 2 is the column player. Here, the entry under row $k$ and column $\ell$ gives the necessary and sufficient conditions for existence of N.E. at $(k, \ell)$.}    \label{tab:NEc}
    \end{center}}
  \end{table}

Next, we show at least one of the above conditions (as listed in Table~\ref{tab:NEc}) hold for any value of $ \beta $. 
\begin{theorem} \label{thm:revpbplph} 
A pure strategy N.E. always exists for the case $\pb \leq \pl< \ph$. 
\end{theorem}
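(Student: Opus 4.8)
The plan is to read off from Table~\ref{tab:NEc} the exact range of $\beta$ for which each of the four pure profiles is a Nash equilibrium, and then verify that these four ranges together exhaust $(0,1)$. Throughout one uses $f,\rho_{\rm c}\in(0,1)$, which follows from Assumption~\ref{ass:demandsuppply}.

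First I would simplify each cell of Table~\ref{tab:NEc} into an interval of loyalties. Since $f<1$, a condition such as $f\ge\min\{\rho_{\rm c}/\beta,1\}$ cannot be met with the minimum equal to $1$, so it is equivalent to $\rho_{\rm c}/\beta\le f$, i.e.\ $\beta\ge\rho_{\rm c}/f$; applying this reasoning to every entry yields the clean descriptions. Writing $m:=\min\{f,\rho_{\rm c}/f\}$, one obtains: (L,L) is a N.E.\ iff $\rho_{\rm c}/f\le\beta\le 1-\rho_{\rm c}/f$; (H,L) is a N.E.\ iff $\rho_{\rm c}\le f$ and $\beta\le m$; (L,H) is a N.E.\ iff $\rho_{\rm c}\le f$ and $\beta\ge 1-m$; and (H,H) is a N.E.\ iff either $\rho_{\rm c}\ge f$ (for all $\beta$) or else $\rho_{\rm c}<f$ with $f\le\beta\le 1-f$. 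Resolving the nested inequalities of the form $\tfrac1f\ge\min\{\cdot,\cdot\}$ and $\tfrac1f\le\min\{\cdot,\cdot\}$ into these half-lines and intervals in $\beta$ is the main bookkeeping step.

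Next I would split on the sign of $\rho_{\rm c}-f$. If $\rho_{\rm c}\ge f$, then (H,H) is a N.E.\ for every $\beta$ and the claim follows at once. So assume $\rho_{\rm c}<f$; then (H,L) covers $(0,m]$ and (L,H) covers $[1-m,1)$, and it remains only to cover the central interval $[m,1-m]$. The decisive observation is that exactly one of (L,L) and (H,H) fills this interval, according to the sign of $\rho_{\rm c}-f^2$: if $\rho_{\rm c}\le f^2$ then $m=\rho_{\rm c}/f$ and (L,L) is a N.E.\ precisely on $[\rho_{\rm c}/f,1-\rho_{\rm c}/f]=[m,1-m]$, while if $\rho_{\rm c}\ge f^2$ then $m=f$ and (H,H) is a N.E.\ precisely on $[f,1-f]=[m,1-m]$. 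In either case the three relevant profiles cover $(0,m]\cup[m,1-m]\cup[1-m,1)=(0,1)$; and if $m>\tfrac12$ the central interval is empty, but then $(0,m]$ and $[1-m,1)$ already overlap and cover $(0,1)$. Hence some profile is a N.E.\ for every $\beta\in(0,1)$.

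I expect the only real obstacle to be the first step, namely converting the four pairs of $\min$-inequalities in Table~\ref{tab:NEc} into the clean $\beta$-intervals above, since once $m$ is introduced and the threshold $\rho_{\rm c}=f^2$ is identified the covering argument reduces to a one-line comparison of interval endpoints.
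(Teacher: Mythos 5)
Your proposal is correct, and the interval reductions check out: since $f=\pl/\ph\in(0,1)$ and $\rc=\ch/\cl\in(0,1)$, the condition $f\ge\min\{\rc/\beta,1\}$ is indeed equivalent to $\beta\ge\rc/f$, the (H,L) cell collapses to $\rc\le f$ together with $\beta\le\min\{f,\rc/f\}$, and the (H,H) cell to ``$\rc\ge f$, or $f\le\beta\le 1-f$.'' Your covering argument then goes through, including the degenerate case $m>\tfrac12$. However, your route is genuinely different from the paper's. The paper (Appendix~E) first partitions on the loyalty, treating $\beta<\tfrac12$, $\beta=\tfrac12$ and $\beta>\tfrac12$ separately, and within $\beta<\tfrac12$ further splits on whether $\rc\le\beta$, $\beta<\rc<1-\beta$ or $1-\beta\le\rc$; in each sub-case it tabulates the simplified conditions and exhibits, by inspecting a small number of ranges of $f$, at least one profile whose conditions hold. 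Your proof instead fixes the market parameters first, splitting on $\rc$ versus $f$ and then on $\rc$ versus $f^2$, and shows that for each such regime the four $\beta$-intervals $(0,m]$, $[m,1-m]$ (filled by exactly one of (L,L) and (H,H)), and $[1-m,1)$ cover $(0,1)$. What your approach buys is a single uniform covering argument in place of the paper's enumeration of sub-cases, plus an explicit description of \emph{which} equilibrium exists for each $(\beta,f,\rc)$; what the paper's decomposition buys is that the sub-case tables (e.g.\ Tables~\ref{tab:NEc_beta_leqhalf_1}--\ref{tab:NEc_beta_half}) double as readable summaries of the equilibrium structure for fixed loyalty, which the authors reuse in the discussion. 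Both arguments are complete proofs of the theorem.
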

The proof is in Appendix \ref{app:5}.

\subsection{$\pl < \pb< \ph$}
In this case the market is supply limited at the low price and demand limited at the high price. As done previously for the demand limited and supply limited cases, we can obtain the payoffs for the revenue game, using the throughput of the platforms from equations (\ref{eq:nll}), (\ref{eq:nlh}), (\ref{eq:nhl}) and (\ref{eq:nhh}). The resulting payoff matrix for Platform 1 is given by Table \ref{tab:rev_payoff1_o} and that of Platform 2 is given by Table \ref{tab:rev_payoff2_o}. 
 \begin{table}[h]
  \begin{center}
    \begin{tabular}{cc|c|c|}
      & \multicolumn{1}{c}{} & \multicolumn{2}{c}{Platform $2$}\\
      & \multicolumn{1}{c}{} & \multicolumn{1}{c}{L}  & \multicolumn{1}{c}{H} \\\cline{3-4}
      \multirow{2}*{Platform $1$}  & L  & $\pl \beta \wl$ & \shortstack{$\pl \min \{\frac{\beta}{1-\beta}\ch, \beta \cl,$\\ $\qquad \quad \wl, \beta \wh \}$} \\\cline{3-4}
      & H & \shortstack{$\ph \min \{c(\ph), \beta c(\pl),$\\ $\qquad \quad  \frac{\beta}{1-\beta}w(\gamma \pl),  \beta w( \gamma \ph) \}$} & $\ph \beta \ch$ \\\cline{3-4}
    \end{tabular}
    \caption {Payoff matrix for Platform 1 for the case $\pl < \pb < \ph$ scaled by a factor $\frac{1}{1-\g}$. Platform 1 is the row player and Platform 2 is the column player.}\label{tab:rev_payoff1_o}
    \end{center}
  \end{table}

   \begin{table}[h]
  \begin{center}
    \setlength{\extrarowheight}{2pt}
    \begin{tabular}{cc|c|c|}
      & \multicolumn{1}{c}{} & \multicolumn{2}{c}{Platform $2$}\\
      & \multicolumn{1}{c}{} & \multicolumn{1}{c}{L}  & \multicolumn{1}{c}{H} \\\cline{3-4}
      \multirow{2}*{Platform $1$}  & L & $\pl \beta \wl$ & \shortstack{$\ph \min \{\frac{\beta}{1-\beta}\ch, \beta \cl,$\\ $\qquad \quad  \wl, \beta \wh \}$} \\\cline{3-4}
      & H & \shortstack{$\pl \min \{c(\ph), \beta c(\pl),$\\ $\qquad \quad  \frac{\beta}{1-\beta}w(\gamma \pl),  \beta w( \gamma \ph) \}$} & $\ph \beta \ch$ \\\cline{3-4}
    \end{tabular}
    \caption {Payoff matrix for Platform 2 for the case $\pl < \pb < \ph$ scaled by a factor $\frac{\beta}{(1-\g)(1-\beta)}$. Platform 1 is the row player and Platform 2 is the column player.}\label{tab:rev_payoff2_o}
    \end{center}
  \end{table}

 \noindent(i) (L, L) is a N.E. iff
\begin{align}
& \pl \beta \wl \geq \ph \min \{ \ch,\beta \cl, \dfrac{\beta}{1-\beta}\wl, \beta \wh \},\\
& \pl \beta \wl \geq \ph \min \{ \dfrac{\beta}{1-\beta}\ch,\beta \cl, \wl, \beta \wh\}.
\end{align}
Dividing by $\beta$ and substituting $f$,
\begin{align}
& f  \wl \geq  \min \{ \dfrac{\ch}{\beta}, \cl, \dfrac{\wl}{1-\beta},  \wh \}, \label{eq:ro_ineqll1}\\
& f  \wl \geq  \min \{ \dfrac{\ch}{1-\beta}, \cl, \dfrac{\wl}{\beta},  \wh\} \label{eq:ro_ineqll2}.
\end{align}
Using Assumption~\ref{ass:demandsuppply} and $0<\beta<1$, we have 
\begin{align}
\wl < \min\{ \cl,\wh, \dfrac{\wl}{1-\beta}, \dfrac{\wl}{\beta}\}
\end{align}
and therefore (\ref{eq:ro_ineqll1}) and (\ref{eq:ro_ineqll2}) hold iff 
\begin{align}
&\wl f \geq \dfrac{\ch}{\beta}, \\
&\wl f \geq \dfrac{\ch}{1-\beta}.
\end{align}
Therefore substituting $\rho$ above, we get that (L, L) is  a N.E. iff
\begin{align}
& f \geq \dfrac{\rho}{\beta}, \\
& f \geq \dfrac{\rho}{1-\beta}.
\end{align}
 \noindent(ii) (H, H) is a N.E. iff
 \begin{align}
& \ph \beta \ch \geq \pl \min \{ \ch,\beta \cl, \dfrac{\beta}{1-\beta}\wl, \beta \wh \},\\
& \ph \beta \ch \geq \pl \min \{ \dfrac{\beta}{1-\beta}\ch,\beta \cl, \wl, \beta \wh\}.
\end{align}
Dividing by $\beta$ and substituting $f$,
\begin{align}
& \dfrac{1}{f}  \ch \geq  \min \{ \dfrac{\ch}{\beta}, \cl, \dfrac{\wl}{1-\beta},  \wh \}, \\
&\dfrac{1}{f}  \ch \geq  \min \{ \dfrac{\ch}{1-\beta}, \cl, \dfrac{\wl}{\beta},  \wh\} .
\end{align} 
 From Assumption~\ref{ass:demandsuppply}, observe that 
\begin{align}
\dfrac{1}{\rho \rw} = \dfrac{\wh}{\ch}> 1, \dfrac{\rho}{\rc} = \dfrac{\cl}{\wl} >1.
\end{align}
Now, substituting $\rho,\ \rc, \ \rw$,
\begin{align}
& \dfrac{1}{f}   \geq  \min \{ \dfrac{1}{\beta}, \dfrac{1}{\rc}, \dfrac{1}{\rho(1-\beta)},  \dfrac{1}{\rho \rw} \}, \label{eq:ro_ineqhh1}\\
& \dfrac{1}{f}   \geq  \min \{ \dfrac{1}{1-\beta}, \dfrac{1}{\rc}, \dfrac{1}{\rho(\beta)},  \dfrac{1}{\rho \rw} \}.\label{eq:ro_ineqhh2}
\end{align}
 \\ \noindent(iii) (L, H) is a N.E. iff
 \begin{align}
& \pl \min \{ \dfrac{\beta}{1-\beta}\ch,\beta \cl, \wl, \beta \wh\} \geq \ph \beta \ch ,\\
& \ph \min \{ \dfrac{\beta}{1-\beta}\ch,\beta \cl, \wl, \beta \wh\} \geq \pl \beta \wl.
\end{align}
Dividing by $\beta$ and substituting $f$,
 \begin{align}
&  \min \{ \dfrac{\ch}{1-\beta}, \cl, \dfrac{\wl}{\beta},  \wh\} \geq  \dfrac{1}{f} \ch ,\\
&  \min \{ \dfrac{\ch}{1-\beta}, \cl, \dfrac{\wl}{\beta},  \wh\} \geq  f  \wl. \label{eq:ro_ineqlh1_temp}
\end{align}
Note that in (\ref{eq:ro_ineqlh1_temp}), $\wl < \min\{\cl, \dfrac{\wl}{\beta},  \wh\} $. Now, for (\ref{eq:ro_ineqlh1_temp}) to hold true, $f \wl \leq \dfrac{\ch}{1-\beta} $.
Thus, (L, H) is a N.E. iff
 \begin{align}
&  \min \{ \dfrac{1}{1-\beta}, \dfrac{1}{\rc}, \dfrac{1}{\rho \beta}, \dfrac{1}{\rho \rw} \} \geq  \dfrac{1}{f}  ,\\
&   \dfrac{\rho}{1-\beta} \geq f .
\end{align}

 \noindent(iv) As in the previous case, the conditions under which (H, L) is a N.E. can be obtained from the conditions for (L, H) to be a N.E. by replacing the factor $1-\beta$ with $\beta$. Thus, (H, L) is a N.E. iff
 \begin{align}
&  \min \{ \dfrac{1}{\beta}, \dfrac{1}{\rc}, \dfrac{1}{\rho( 1- \beta)}, \dfrac{1}{\rho \rw} \} \geq  \dfrac{1}{f}  ,\\
&   \dfrac{\rho}{\beta} \geq f .
\end{align}

Table~\ref{tab:NEo} summarizes the conditions for the existence of various pure strategy N.E for this case.

\begin{table}[h]
 {\color{black}
  \begin{center}
    \setlength{\extrarowheight}{2pt}
\begin{tabular}{cc|c|c|}
      & \multicolumn{1}{c}{} & \multicolumn{2}{c}{}\\
      & \multicolumn{1}{c}{} & \multicolumn{1}{c}{L}  & \multicolumn{1}{c}{H} \\\cline{3-4}
      \multirow{2}*{}  &L  & 
      $f \geq \frac{\rho}{1-\beta},\frac{\rho}{\beta}$ 
      & \shortstack{$\frac{1}{f}\leq \min \{ \frac{1}{1-\beta}, \frac{1}{\rc}, \frac{1}{\rho \beta}, \frac{1}{\rho \rw} \},$\\ $ f \leq  \frac{\rho}{1-\beta}$} \\\cline{3-4} 
      & H &
       \shortstack{$\frac{1}{f}\leq \min \{ \frac{1}{\beta}, \frac{1}{\rc}, \frac{1}{\rho (1-\beta)}, \frac{1}{\rho \rw} \},$\\ $ f \leq  \frac{\rho}{\beta}$} & 
       \shortstack{$ \frac{1}{f}   \geq   \min \{ \frac{1}{1-\beta}, \frac{1}{\rc}, \frac{1}{\rho\beta},  \frac{1}{\rho \rw} \},$\\ $\frac{1}{f} \geq \min \{ \frac{1}{\beta}, \frac{1}{\rc}, \frac{1}{\rho(1-\beta)},  \frac{1}{\rho \rw} \} $ }\\\cline{3-4}
    \end{tabular}
    \caption {Conditions for existence of N.E. for revenue game when $\pl <\pb<  \ph$. Here, $\rho=\frac{\ch}{\wl}, \rw=\frac{\wl}{\wh}, \rc=\frac{\ch}{\cl}$. Platform 1 is the row player and Platform 2 is the column player.}    \label{tab:NEo}
    \end{center}}
  \end{table}
    \begin{theorem}
  A pure strategy N.E. always exists for the case $ \pl< \pb < \ph$. \label{thm:revplpbph} 
  \end{theorem}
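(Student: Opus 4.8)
The plan is to treat the revenue game for this case as a $2\times 2$ bimatrix game with the (scaled) payoffs read off from Tables~\ref{tab:rev_payoff1_o} and~\ref{tab:rev_payoff2_o}, and to show it can never be a ``generalized matching pennies'', i.e.\ that its best-response structure cannot close into a cycle. The structural fact I would exploit is that, by Theorem~\ref{thm:routing}, the off-diagonal throughputs of the two platforms coincide up to the factor $\frac{1-\beta}{\beta}$. Consequently, after dropping the positive scalars $\frac{1}{1-\g}$ and $\frac{\beta}{(1-\g)(1-\beta)}$, the $(\rm L,\rm H)$ entry is $\pl N_1^{\rm LH}$ for Platform~1 and $\ph N_1^{\rm LH}$ for Platform~2 (the \emph{same} $N_1^{\rm LH}$), while the $(\rm H,\rm L)$ entries are $\ph N_1^{\rm HL}$ and $\pl N_1^{\rm HL}$; the diagonal entries are the clean quantities $\pl\beta\wl$ at $(\rm L,\rm L)$ and $\ph\beta\ch$ at $(\rm H,\rm H)$. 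I would therefore normalize by $\beta\wl>0$ and set $X:=N_1^{\rm LH}/(\beta\wl)$, $Y:=N_1^{\rm HL}/(\beta\wl)\ge 0$, with $\rho=\ch/\wl$ and $f=\pl/\ph\in(0,1)$, noting $\beta\ch/(\beta\wl)=\rho$. This is the same philosophy used to establish existence in Theorem~\ref{thm:revpbplph}.

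With this notation the four unilateral-deviation comparisons are transparent: against $\rm L$, Platform~1 strictly gains by switching to $\rm H$ iff $Y>f$; against $\rm H$, Platform~1 strictly gains by switching to $\rm H$ iff $\rho>fX$; against $\rm L$, Platform~2 strictly gains by switching to $\rm H$ iff $X>f$; and against $\rm H$, Platform~2 strictly gains by switching to $\rm H$ iff $\rho>fY$. A cell is a pure N.E.\ precisely when neither player has such a strict gain. I would then suppose, for contradiction, that none of the four cells is an equilibrium, so each admits a strict deviation. The leverage comes from $(\rm H,\rm H)$: its failure to be an equilibrium means a platform strictly gains by leaving $\rm H$, which forces $\rho<fX$ or $\rho<fY$.

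In the sub-case $\rho<fX$, I would cascade through the other three cells. Since $\rho<fX$, Platform~1 is content at $(\rm L,\rm H)$, so the failure of $(\rm L,\rm H)$ must be Platform~2's deviation, forcing $X<f$. Given $X<f$, Platform~2 is content at $(\rm L,\rm L)$, so the failure of $(\rm L,\rm L)$ is Platform~1's deviation, forcing $Y>f$. Given $Y>f$, Platform~1 is content at $(\rm H,\rm L)$, so the failure of $(\rm H,\rm L)$ is Platform~2's deviation, forcing $\rho>fY$. Combining and using $0<f<1$ gives $\rho<fX<f^2$ and $\rho>fY>f^2$, a contradiction. The sub-case $\rho<fY$ is handled by the identical cascade with the platforms interchanged, which corresponds exactly to the substitution $\beta\leftrightarrow 1-\beta$ carrying Table~\ref{tab:rev_payoff1_o} into Table~\ref{tab:rev_payoff2_o}; it again yields both $\rho<f^2$ and $\rho>f^2$. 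Hence the four cells cannot simultaneously fail, so a pure N.E.\ exists.

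I expect the only delicate point to be the bookkeeping of strict versus weak inequalities when declaring a cell a non-equilibrium, so that each forced deduction in the cascade is legitimate. The appealing feature of this route is that it never requires expanding the minima defining $N_1^{\rm LH}$ and $N_1^{\rm HL}$, so the detailed constraints relating $\rho,\rc,\rw$ are essentially unused; the whole argument rests on the price-scaling symmetry of the two payoff tables and on the elementary fact that a best-response cycle would pin $\rho$ strictly below and strictly above $f^2$ at once. As a by-product this explains \emph{why} the explicit interval conditions collected in Table~\ref{tab:NEo} must exhaust $(0,1)$ for every admissible $\beta$.
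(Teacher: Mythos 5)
Your argument is correct. It shares the paper's skeleton --- Appendix F likewise proves existence by showing that the $2\times2$ best-response cycle is impossible (Lemma~\ref{lemma:pure}) --- but your verification of that impossibility is genuinely different. The paper works with the fully expanded conditions of Table~\ref{tab:NEo}: it assumes the four strict negations (\ref{eq:o1})--(\ref{eq:o4}), derives $\beta<1-\beta$ and $(1-\beta)\rho<f$ from the last two, and then argues over which term attains the minimum in (\ref{eq:o2}) to reach a contradiction, handling the mirror cycle symmetrically. You instead keep the off-diagonal throughputs packaged as $X=N_1^{\rm LH}/(\beta\wl)$ and $Y=N_1^{\rm HL}/(\beta\wl)$, use the fact that the two platforms' off-diagonal payoffs differ only in which of $\pl,\ph$ multiplies the \emph{same} throughput, and cascade around the cycle to pin $\rho<fX<f^2$ and $\rho>fY>f^2$ simultaneously. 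I checked the delicate point you flag: a cell fails only via a \emph{strictly} profitable deviation, each step of the cascade correctly rules out one player's deviation and therefore forces the other's strictly, and both sub-cases of the failure of $(\rm H,\rm H)$ close, so the argument is sound. What your route buys is that the minima defining $N_1^{\rm LH}$ and $N_1^{\rm HL}$ are never expanded, so $\rc$, $\rw$ and the attendant case analysis disappear entirely; what the paper's route buys is that it reads directly off the equilibrium conditions of Table~\ref{tab:NEo}, which it derives anyway for the full characterization.
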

Proof is in Appendix \ref{app:6}.

\section{Discussion}\label{sec:discussion}
We now present a discussion on the results obtained for the throughput and revenue competition games under different market conditions and loyalty functions.
\subsection{Supply limited market}
\begin{enumerate}
	\item For throughput game,
		\begin{enumerate}
			\item Both platforms together achieve their best throughput at (H, H) (Table~\ref{tab:payoff_tot}). Playing high price is a weakly dominant strategy for the platforms and thus (H, H) is always a N.E.
				 \begin{table}[h]
			 \begin{center}
 			\setlength{\extrarowheight}{2pt}
 			\begin{tabular}{cc|c|c|}
 				& \multicolumn{1}{c}{} & \multicolumn{2}{c}{Platform $2$}\\
 				& \multicolumn{1}{c}{} & \multicolumn{1}{c}{L} & \multicolumn{1}{c}{H} \\\cline{3-4}
 				\multirow{2}*{Platform $1$} & L & $ \wl$ & $\min\{\dfrac{\wl}{\beta},\wh\}$ \\\cline{3-4}
 				& H & $\min\{\dfrac{\wl}{1-\beta},\wh\}$& $ \wh$ \\\cline{3-4}
			\end{tabular}
		 \caption {Total number of workers served for supply limited market $(\pl < \ph \leq \pb)$ for fixed $\beta$.}\label{tab:payoff_tot}
 			\end{center}
			\end{table}		
			\item In a market which is supply limited at $(p_1, p_2)$, the following three constraints on $w_1$ need to be met --
				\begin{align}
					 w_1& \leq w(\gamma p_1), \label{eq:w11}\\
					w_1& \leq \dfrac{\beta}{1-\beta} w(\gamma p_2), \label{eq:w12}\\
					w_1& \leq \beta w(\max\{\gamma p_1, \gamma p_2\})\label{eq:w1t}.
				\end{align}
			(\ref{eq:w11}) corresponds to the supply constraint at Platform 1 $w_1 \leq w(\gamma p_1) $. Recall that $w_2^*=\dfrac{1-\beta}{\beta}w_1^*$ and thus (\ref{eq:w11}) corresponds to the supply constraint at Platform 2. (\ref{eq:w1t}) captures the upper-bound on the total supply available in the market.
			
			\item (L, H) is a N.E. iff $ \wh \leq \wl / \beta$. This occurs when (\ref{eq:w1t}) provides a tighter upper bound than (\ref{eq:w11}), i.e., the total number of workers served is $\wh$. 
			Platform 2 is strictly better off playing $H$ than $L$. Platform 1 is indifferent to either of the two strategies when $\beta \wh \leq \wl$. If there's a perturbation in the market which results in $\beta \wh$ exceeding $\wl$ then Platform 1 is no longer indifferent and strictly prefers $H$ over $L$. (H, L) is a N.E. iff $ \wh \leq \wl / (1-\beta)$. If $\wh$ is significantly greater than $\wl$, then (H, H) is the only equilibrium.
			
			\item When $\beta=1/2$, 
				\begin{enumerate}
				\item (L, H) and (H, L) are N.E. iff $\wl \leq \wh \leq 2\wl$.				
				\item If there was only a single throughput maximizing platform in the market, it would have played the high price, paying an amount $\gamma \ph$ to each of the $\wh$ workers availing service. 
				\par In contrast, (L, H) and (H, L) may also be equilibria in the case of two platforms. When, (L, H) is an equilibrium then of the total $\wh$ workers availing service, $\beta \wh$ get served at Platform 1 receiving an amount $\gamma \pl$ while the rest get served at Platform 2 at $\gamma \ph$. Observe that in the case of a single as well as two platforms, a total of $\wh$ workers get served but a fraction $\beta \wh$ receive a lower payment in the case of two platforms. Consequently, the aggregate utility of workers can be lower when two identical platforms operate in a supply limited market.
				\end{enumerate}
				
			\item When $\beta<1/2$, Platform 1 has lower customer loyalty than Platform 2.
				\begin{enumerate}
				\item If (H, L) is a N.E. then (L, H) is also a N.E but the converse is not true. This follows from noting that in the payoff matrix $\dfrac{\beta}{1-\beta}\wl < \wl$ when $\beta <1/2$.
				\end{enumerate}
		\end{enumerate}

	\item For the revenue competition game,
		\begin{enumerate}
			\item Playing high price is a strictly dominant strategy for both platforms and thus (H, H) is always a N.E. Moreover, (H, H) is the only N.E. for the revenue game.

		\end{enumerate}		

\end{enumerate}


\subsection{Demand limited market}
\begin{enumerate}
	\item For the throughput competition game,
		\begin{enumerate}
			\item $L$ is a dominant strategy for both Platforms. The Platforms achieve their highest throughput at (L, L) and it is always a N.E.
			\item (L, H) is an equilibrium iff $\cl \leq \dfrac{\ch}{1-\beta}$ and (H, L) is an equilibrium ff $\cl \leq \dfrac{\ch}{\beta}$.
		\end{enumerate}
	The remaining analysis for throughput game in a demand limited setting is analogous to the supply limited market with strategies $H$ and $L$ interchanged.
	
	\item For the revenue competition game,
		\begin{enumerate}
			\item Let $\rh/\rl$ denote the ratio $\ph \ch/\pl \cl$. Table~\ref{tab:NEcinterpret} enlists the conditions for existence of N.E (Table~\ref{tab:NEc} ) in terms of $\rh/\rl$ and $\pl/\ph$.
			\item Note that (H, H) is never an equilibrium for the throughput game. However, observe that is $\rh/\rl \geq 1$, then (H, H) is a N.E. At (H, H) Platform 1 generates $(1-\gamma)\beta \ph \ch$ and Platform 2 generates $(1-\gamma)(1-\beta) \ph \ch$.
			\item If $\cl \leq \ch/1-\beta, \ch/\beta$, then (L, L) ceases to be an equilibrium.
		\end{enumerate}
	 \begin{table}[h]
{\color{black}
 \begin{center}
  \setlength{\extrarowheight}{2pt}
  \begin{tabular}{cc|c|c|}
   & \multicolumn{1}{c}{} & \multicolumn{2}{c}{}\\
   & \multicolumn{1}{c}{} & \multicolumn{1}{c}{L} & \multicolumn{1}{c}{H} \\\cline{3-4}
   \multirow{2}*{} & L & 
   $\rh/\rl \leq \beta, 1-\beta$ & 
   $ \ph/\pl \leq 1/1-\beta  , \quad 1-\beta \leq \rh/\rl \leq 1 $ 
   \\\cline{3-4} 
   & H &  $ \ph/\pl \leq 1/\beta  , \quad \beta \leq \rh/\rl \leq 1 $ & 
$\rh/\rl \geq 1$ or $ \ph/\pl \geq 1/1-\beta, 1/\beta$ \\\cline{3-4}
  \end{tabular}
  \caption {Conditions for existence of N.E. for revenue competition game when $\pb \leq \pl< \ph$ in terms of $\rh/\rl$ and $\pl/\ph$.}  \label{tab:NEcinterpret}
  \end{center}}
 \end{table}

\end{enumerate}
{\color{purple}

\subsection{Market with price dependent customer loyalty}
Here, we analyse the more general case where $\beta$ is allowed to be a function of $(p_1, p_2)$ such that a larger fraction of customers avails service at the cheaper platform when the platforms price their services differently. Specifically,			
\begin{equation}\label{eq:def}
\beta(p_1,p_2)=
\begin{cases}
&1/2 \text{ if } p_1=p_2, \\
&1/2+t \text{ if } p_1<p_2, \text{ where } 0<t<1/2, \\
&1/2-t \text{ if } p_1>p_2.
\end{cases}
\end{equation}
				
	 \begin{lemma}\label{lemma:4}
 Consider the two-player simultaneous move game with symmetric payoffs given by Table (\ref{tab:payoffexsymm}). A pure strategy N.E. always exists for this game.
 \end{lemma}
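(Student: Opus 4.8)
The plan is to exploit the symmetry of the payoff table to reduce the existence question to a short, exhaustive case analysis over the orderings of the payoff entries. I would write the payoff of the row player as $a,b,c,d$ at the strategy profiles $(\mathrm{L},\mathrm{L})$, $(\mathrm{L},\mathrm{H})$, $(\mathrm{H},\mathrm{L})$, $(\mathrm{H},\mathrm{H})$ respectively. Because the game is symmetric (Table~\ref{tab:payoffexsymm}), the column player's payoff at any profile equals the row player's payoff at the transposed profile; in particular the column player earns $c$ at $(\mathrm{L},\mathrm{H})$ and $b$ at $(\mathrm{H},\mathrm{L})$, while the diagonal entries $a$ and $d$ are common to both players.

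Next I would translate each of the four pure profiles into its best-response (no-deviation) inequalities. Using the symmetry, $(\mathrm{L},\mathrm{L})$ is a N.E. iff $a \ge c$; $(\mathrm{H},\mathrm{H})$ is a N.E. iff $d \ge b$; and each of $(\mathrm{L},\mathrm{H})$ and $(\mathrm{H},\mathrm{L})$ is a N.E. iff both $c \ge a$ and $b \ge d$ hold. The purpose of recording these is to observe that the conditions depend only on the two comparisons $a \lessgtr c$ and $b \lessgtr d$.

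The argument then closes by a two-step dichotomy on these comparisons. If $a \ge c$, then $(\mathrm{L},\mathrm{L})$ is a N.E. and we are done. Otherwise $a < c$, so the condition $c \ge a$ already holds; now split on $b$ versus $d$: if $d \ge b$ then $(\mathrm{H},\mathrm{H})$ is a N.E., while if $d < b$ then both $c \ge a$ and $b \ge d$ hold and hence $(\mathrm{L},\mathrm{H})$ (equivalently $(\mathrm{H},\mathrm{L})$) is a N.E. These three cases are exhaustive, so a pure strategy N.E. always exists.

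I do not expect a genuine obstacle here; the only thing to be careful about is reading off the no-deviation conditions correctly from the symmetric table, in particular getting the column player's off-diagonal payoffs right, and confirming that the dichotomy is exhaustive with the weak inequalities consistently oriented. It is worth emphasizing that symmetry is precisely what makes the off-diagonal equilibrium conditions coincide and thereby forces existence: an asymmetric $2\times 2$ game (such as matching pennies) can fail to have a pure equilibrium, so the symmetry built into Table~\ref{tab:payoffexsymm} is doing the essential work.
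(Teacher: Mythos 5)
Your proof is correct and follows essentially the same route as the paper's: both reduce existence to the two comparisons $a_{11}$ vs.\ $a_{21}$ and $a_{22}$ vs.\ $a_{12}$ and exhaust the three resulting cases, with $(\mathrm{L},\mathrm{L})$, $(\mathrm{H},\mathrm{H})$, or the off-diagonal profiles serving as the equilibrium. You also read the column player's off-diagonal payoffs the way the symmetry (and the paper's own case analysis) requires, so nothing further is needed.
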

 \begin{proof}
We prove this by finding at least one N.E. for all the cases below,
\begin{enumerate}
\item If $a_{11} \geq a_{21}$ then (L, L) is a N.E.
\item If $a_{22} \geq a_{12}$ then (H, H) is a N.E.
\item If $a_{11}< a_{21}$ and $a_{22} < a_{12}$ then both (L, H) and (H, L) are equilibria.
\end{enumerate}
Thus, we have argued for the existence of N.E. for all values of payoffs and this completes the proof.
\end{proof}
\begin{table}[h]
{
 \begin{center}
  \setlength{\extrarowheight}{2pt}
  \begin{tabular}{cc|c|c|}
   & \multicolumn{1}{c}{} & \multicolumn{2}{c}{}\\
   & \multicolumn{1}{c}{} & \multicolumn{1}{c}{$B_1$} & \multicolumn{1}{c}{$B_2$} \\\cline{3-4}
   \multirow{2}*{} & $A_1$ & 
   $(a_{11}, a_{11})$ & 
   $(a_{12}, a_{21})$ \\\cline{3-4} 
   &$ A_2$& 
   $(a_{21}, a_{21})$& 
   $(a_{22}, a_{22})$ \\\cline{3-4}
  \end{tabular}
  \caption {Payoffs for a two player simultaneous move game. Here, $a_{11}$ denotes Platform 1's payoff when Platform1 plays $A_1$ and Platform 2 plays $B_1$.}  \label{tab:payoffexsymm}
  \end{center}}
 \end{table}

\begin{theorem}
In the presence of price dependent customer loyalty, let $ t\in [0,1/2] $ and 
\begin{equation}
\beta(p_1,p_2)=
\begin{cases}
&1/2 \text{ if } p_1=p_2, \\
&1/2+t \text{ if } p_1<p_2, \\
&1/2-t \text{ if } p_1>p_2,
\end{cases}
\end{equation}
a pure strategy N.E. always exists for the throughput and revenue competition games.
\end{theorem}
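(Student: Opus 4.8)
The plan is to show that, for both objectives, the price-dependent-loyalty game is a two-player simultaneous-move game whose $2\times 2$ payoff matrix has exactly the symmetric form of Table~\ref{tab:payoffexsymm}, and then to invoke Lemma~\ref{lemma:4}. Each platform has the two strategies $\rm L$ and $\rm H$, so I only need to verify that the four payoff cells obey the required symmetry: equal payoffs to the two platforms on the diagonal cells $(\rm L,L)$ and $(\rm H,H)$, and payoffs that are merely interchanged between the two platforms on the off-diagonal cells $(\rm L,H)$ and $(\rm H,L)$.

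The key structural fact I would first isolate is a reflection symmetry of the routing map of Theorem~\ref{thm:routing}. Writing $c_1^*(\beta,p_1,p_2)$ and $c_2^*(\beta,p_1,p_2)$ for the two allocations, I claim
\[
c_1^*(\beta,p_1,p_2)=c_2^*(1-\beta,p_2,p_1).
\]
This follows term by term: under the simultaneous substitution $p_1\leftrightarrow p_2$ and $\beta\leftrightarrow 1-\beta$, each argument of the minimum defining $c_1^*$ is carried to the corresponding argument of the minimum defining $c_2^*$ (the factors $\beta$ and $\tfrac{\beta}{1-\beta}$ becoming $1-\beta$ and $\tfrac{1-\beta}{\beta}$, while $\min$ and $\max$ are insensitive to the order of their arguments), so the two minima agree.

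Next I would combine this with the symmetry of the loyalty rule itself. Under the stated definition the platform posting the strictly lower price always receives share $1/2+t$, regardless of its index, and $\beta=1/2$ whenever the two prices coincide. Concretely, at $(\rm L,H)$ we have $(p_1,p_2)=(\pl,\ph)$ and $\beta=1/2+t$, while at $(\rm H,L)$ we have $(p_1,p_2)=(\ph,\pl)$ and $\beta=1/2-t$. Applying the reflection identity, Platform~2's throughput at $(\rm H,L)$ is $c_2^*(1/2-t,\ph,\pl)=c_1^*(1/2+t,\pl,\ph)$, which is exactly Platform~1's throughput at $(\rm L,H)$; symmetrically, Platform~1's throughput at $(\rm H,L)$ equals Platform~2's throughput at $(\rm L,H)$. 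On the diagonal, $\beta=1/2$ together with $p_1=p_2$ forces $c_1^*=c_2^*$, so the two platforms earn the same throughput. Hence the throughput game has the form of Table~\ref{tab:payoffexsymm}, and Lemma~\ref{lemma:4} yields a pure N.E.

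For the revenue game the payoff is the throughput scaled by $(1-\gamma)$ and by the price the platform itself posts; a platform playing $\rm L$ always multiplies by $\pl$ and one playing $\rm H$ always by $\ph$, so the price factor travels along consistently under the index/price swap. For instance Platform~2's revenue at $(\rm H,L)$ is $(1-\gamma)\pl\,c_2^*(1/2-t,\ph,\pl)=(1-\gamma)\pl\,c_1^*(1/2+t,\pl,\ph)$, precisely Platform~1's revenue at $(\rm L,H)$, and the expensive-side entries match in the same way; the diagonal entries remain equal because there both platforms post the same price and split the market evenly. Thus the revenue game is again symmetric in the sense of Table~\ref{tab:payoffexsymm}, and Lemma~\ref{lemma:4} applies. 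The step I expect to be the crux is establishing the reflection identity $c_1^*(\beta,p_1,p_2)=c_2^*(1-\beta,p_2,p_1)$ cleanly and checking that the price-dependent loyalty rule is compatible with it, i.e.\ that the cheaper platform's share is index-independent; once these are in hand, the symmetry of both payoff matrices is immediate and existence follows directly from Lemma~\ref{lemma:4}.
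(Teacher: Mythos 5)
Your proposal is correct and follows essentially the same route as the paper: reduce both games to the symmetric form of Table~\ref{tab:payoffexsymm} (i.e.\ $N_1^{k\ell}=N_2^{\ell k}$, with equal diagonal payoffs) and invoke Lemma~\ref{lemma:4}. The only difference is that you verify the symmetry explicitly via the reflection identity $c_1^*(\beta,p_1,p_2)=c_2^*(1-\beta,p_2,p_1)$ read off from Theorem~\ref{thm:routing}, whereas the paper asserts it by an informal indistinguishability argument; your version is a slightly more careful rendering of the same idea.
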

\begin{proof}
From the perspectives of users, all that the platforms differ in is the prices they charge and since the prices announced completely determines the loyalty, the observed throughputs for the platforms should be symmetric with respect to the prices charged by them, i.e., $N_1^{k\ell}=N_2^{\ell k}$, where $k,\ell \in {L, H}$. Further, since both the platforms only have the prices $\pl,\ph$ as available strategies, the revenue generated by them is also symmetric with respect to prices. It follows that the payoff of Platform 1 at (L, H) is same as the payoff of Platform 2 at (H, L) and vice versa. Also, when both platforms announce same prices, their payoffs are identical. Thus, the game is of the form described in Table~\ref{tab:payoffexsymm} and using Lemma~\ref{lemma:4}, it follows that a pure strategy N.E. always exists. 
\end{proof}
\par Recall that the solution to the user allocation problem (S) was computed at a fixed price. Using Theorem~\ref{thm:routing}, we analyse the supply and demand limited markets in the presence of price dependent loyalty.}
		\begin{enumerate}
			\item If $\beta$ is a function $(p_1, p_2)$ as defined above, the throughput for the supply limited market is given in Tables~\ref{tab:payoff_w1beta_var},~\ref{tab:payoff_w2beta_var} and \ref{tab:payoff_tot_beta_var}. (H, H) is a N.E. if $\wh/2\geq\wl$. Unlike the case of fixed loyalty $\beta$, when Platform 2 is playing H, Platform 1 can achieve better throughput by playing L if $\wh/2<\wl$. Also note that for Platform 1, (L, H) yields a better throughput than (H, L). This is because the total number of workers who are served is $\min\{\dfrac{\wl} {1/2+t},\wh\}$ for both (L, H) and (H, L) but Platform 1's share is larger when it plays the lower price. 
			 
			 \begin{table}[h]
			 \begin{center}
  			\setlength{\extrarowheight}{2pt}
  			\begin{tabular}{cc|c|c|}
   				& \multicolumn{1}{c}{} & \multicolumn{2}{c}{}\\
   				& \multicolumn{1}{c}{} & \multicolumn{1}{c}{L} & \multicolumn{1}{c}{H} \\\cline{3-4}
   				\multirow{2}*{} & L & $\wl/2 $ & $\min\{\wl, (1/2+t)\wh\}$ \\\cline{3-4}
   				& H & $\min \{\dfrac{1/2-t}{1/2+t} \wl, (1/2-t) \wh \}$ & $\wh/2 $ \\\cline{3-4}
  			\end{tabular}
 			 \caption {Throughput for Platform 1 for supply limited market $(\pl < \ph \leq \pb)$ with $\beta$ as a function of $(p_1, p_2)$. Platform 1 is the row player, Platform 2 is the column player.}\label{tab:payoff_w1beta_var}
  			\end{center}
 			\end{table}
			
			 \begin{table}[h]
			 \begin{center}
  			\setlength{\extrarowheight}{2pt}
  			\begin{tabular}{cc|c|c|}
   				& \multicolumn{1}{c}{} & \multicolumn{2}{c}{}\\
   				& \multicolumn{1}{c}{} & \multicolumn{1}{c}{L} & \multicolumn{1}{c}{H} \\\cline{3-4}
   				\multirow{2}*{} & L & $\wl/2$ & $\min \{\dfrac{1/2-t}{1/2+t} \wl, (1/2-t) \wh \}$ \\\cline{3-4}
   				& H & $\min\{\wl, (1/2+t)\wh\}$ & $\wh/2$ \\\cline{3-4}
  			\end{tabular}
 			 \caption {Throughput for Platform 2 for supply limited market $(\pl < \ph \leq \pb)$ with $\beta$ as a function of $(p_1, p_2)$. Platform 1 is the row player, Platform 2 is the column player.}\label{tab:payoff_w2beta_var}
  			\end{center}
 			\end{table}
			
			 \begin{table}[h]
			 \begin{center}
  			\setlength{\extrarowheight}{2pt}
  			\begin{tabular}{cc|c|c|}
   				& \multicolumn{1}{c}{} & \multicolumn{2}{c}{Platform $2$}\\
   				& \multicolumn{1}{c}{} & \multicolumn{1}{c}{L} & \multicolumn{1}{c}{H} \\\cline{3-4}
   				\multirow{2}*{Platform $1$} & L & $ \wl$ & $\min\{\dfrac{\wl} {1/2+t},\wh\}$ \\\cline{3-4}
   				& H & $\min\{\dfrac{\wl} {1/2+t},\wh\}$ & $ \wh$ \\\cline{3-4}
  			\end{tabular}
 			 \caption {Total number of workers served when $\pl < \ph \leq \pb$ and $\beta$ is a function of price $(p_1, p_2)$.}\label{tab:payoff_tot_beta_var}
  			\end{center}
 			\end{table}
	\item If $\beta$ is allowed to be a function of $(p_1, p_2)$ as defined in (\ref{eq:def}), then for the demand-limited market, (L, L) is always the unique N.E. for throughput game. The throughput of Platform 1 is given in Table~\ref{tab:payoff_c_betavar}. Observe that $N_1^{\rm HL}= \min \{\ch, (1/2-t) \cl \}\leq(1/2-t) \cl < 1/2) \cl =N_1^{\rm HL}$. Moreover, $\ch/2 < \cl/2 < (1/2+t)\cl/2$ and $\ch/2 <\ch <\dfrac{1/2+t}{1/2-t}\ch$ and thus $N_1^{\rm HH}\leq N_1^{\rm LH}$. Therefore, $L$ emerges as a strictly dominant strategy for the platforms in this case. 

			 \begin{table}[h]
			 \begin{center}
  			\setlength{\extrarowheight}{2pt}
  			\begin{tabular}{cc|c|c|}
   				& \multicolumn{1}{c}{} & \multicolumn{2}{c}{Platform $2$}\\
   				& \multicolumn{1}{c}{} & \multicolumn{1}{c}{L} & \multicolumn{1}{c}{H} \\\cline{3-4}
   				\multirow{2}*{Platform $1$} & L & $\cl/2 $ & $\min\{ \dfrac{1/2+t}{1/2-t}\ch, (1/2+t)\cl \}$ \\\cline{3-4}
   				& H & $\min \{\ch, (1/2-t) \cl \}$ & $\ch/2 $ \\\cline{3-4}
  			\end{tabular}
 			\caption {Throughput for Platform 1 for demand limited market $(\pb \leq \pl < \ph)$ with $\beta$ as a function of $(p_1, p_2)$.}\label{tab:payoff_c_betavar}
  			\end{center}
 			\end{table}

	\end{enumerate}
{\color{purple}
We now analyse the case where all the customers go to the platform with the lower price. This is essentially a market in the absence of loyalty where customers simply affiliate with the platform which charges them less. More precisely, $\beta$ is described as follows,			
				\begin{equation}\label{eq:def2}
					 \beta(p_1,p_2)=
								 \begin{cases}
 					                &1/2 \text{ if } p_1=p_2, \\
 					                &1 \text{ if } p_1<p_2,\\
					                &0 \text{ if } p_1>p_2.
								 \end{cases}
				\end{equation}
Observe that the customers choose the platform with the lower price independent of all other characteristics of the platforms and therefore therefore it can be argued that $(L, H)$ is an equilibrium iff $(H, L)$ is an equilibrium. 
\begin{enumerate}
\item 		For the supply limited market (Table~\ref{tab:payoff_w1beta_var_strict}), 
\begin{enumerate} \item For throughput game, 
(L, L) is always a N.E. and (H, H) is a N.E. iff $\wh \geq 2 \wl$ and (L, H), (H, L) are never equilibria. .
			 \begin{table}[h]
			 \begin{center}
  			\setlength{\extrarowheight}{2pt}
  			\begin{tabular}{cc|c|c|}
   				& \multicolumn{1}{c}{} & \multicolumn{2}{c}{Platform $2$}\\
   				& \multicolumn{1}{c}{} & \multicolumn{1}{c}{L} & \multicolumn{1}{c}{H} \\\cline{3-4}
   				\multirow{2}*{Platform $1$} & L & $\wl/2 $ & $\wl$ \\\cline{3-4}
   				& H & $0$ & $\wh/2 $ \\\cline{3-4}
  			\end{tabular}
 			 \caption {Throughput for Platform 1 for supply limited market $(\pl < \ph \leq \pb)$ with $\beta$ as described in (\ref{eq:def2}).}\label{tab:payoff_w1beta_var_strict}
  			\end{center}
 			\end{table}
			\item For revenue game, (L, L) is always a N.E. and (H, H) is a N.E. iff $\ph \wh \geq 2 \pl \wl$ and (L, H), (H, L) are never equilibria.
\end{enumerate}
\item 		For the demand limited market (Table~\ref{tab:payoff_c_betavar_strict}),
\begin{enumerate}
\item For throughput game, (L, L) is the only equilibrium and it always exists.%
\item For revenue game, (L, L) is always a N.E. and (H, H) is a N.E. iff $\ph \ch \geq 2 \pl \cl$ and (L, H), (H, L) are never equilibria.
\end{enumerate}
			 \begin{table}[h]
			 \begin{center}
  			\setlength{\extrarowheight}{2pt}
  			\begin{tabular}{cc|c|c|}
   				& \multicolumn{1}{c}{} & \multicolumn{2}{c}{Platform $2$}\\
   				& \multicolumn{1}{c}{} & \multicolumn{1}{c}{L} & \multicolumn{1}{c}{H} \\\cline{3-4}
   				\multirow{2}*{Platform $1$} & L & $\cl/2 $ & $\cl $ \\\cline{3-4}
   				& H & $0$ & $\ch/2 $ \\\cline{3-4}
  			\end{tabular}
 			\caption {Throughput for Platform 1 for demand limited market $(\pb \leq \pl < \ph)$ with $\beta$ as described in (\ref{eq:def2}).}\label{tab:payoff_c_betavar_strict}
  			\end{center}
 			\end{table}
\item 		When $ \pl<\pb < \ph$ (Table~\ref{tab:payoff_o_betavar_strict}), 
\begin{enumerate}
\item For throughput game, (L, L) is always a N.E. and (H, H) is a N.E. iff $\ch \geq 2 \wl$%
\item For revenue game, (L, L) is always a N.E. and (H, H) is a N.E. iff $\ph \ch \geq 2 \pl \wl$ and (L, H), (H, L) are never equilibria.
\end{enumerate}
			\begin{table}[h]
			 \begin{center}
  			\setlength{\extrarowheight}{2pt}
  			\begin{tabular}{cc|c|c|}
   				& \multicolumn{1}{c}{} & \multicolumn{2}{c}{Platform $2$}\\
   				& \multicolumn{1}{c}{} & \multicolumn{1}{c}{L} & \multicolumn{1}{c}{H} \\\cline{3-4}
   				\multirow{2}*{Platform $1$} & L & $\wl/2 $ & $\wl $ \\\cline{3-4}
   				& H & $0$ & $\ch/2 $ \\\cline{3-4}
  			\end{tabular}
 			\caption {Throughput for Platform 1 for market with $ \pl<\pb < \ph$ and $\beta$ as described in (\ref{eq:def2}).}\label{tab:payoff_o_betavar_strict}
  			\end{center}
 			\end{table}
Thus, in the presence of strictly price dependent loyalties as described by (\ref{eq:def2}), the platforms are forced to play indentical prices at equilibrium. 
\end{enumerate}
}

\section{Summary}\label{sec:summary}

We modelled duopolistic platform competition in two-sided markets in which two platforms provide identical services to the users at potentially different prices. We characterized the Nash equilibrium strategies of a price competition game for throughput and revenue between two platforms operating in this setting. We assumed that customers have a preference or loyalty to the platforms, {and the resulting allocation maximized the aggregate utility of the users.} We found that for each value of the loyalty, there exists a pure strategy Nash equilibrium and characterized it. We also established the existence of a Nash equilibrium in the case of price dependent loyalty constraints in which more customers avail service at the platform which announces the lower price. Further questions to be explored include the consideration of  allocations based on individual utility maximization, asymmetric commissions and loss of efficiency due to the presence of intermediaries.

\section*{\large Appendix}
\appendix
\normalsize
\section{Proof of Theorem \ref{thm:eq1}} \label{app:1}
\begin{proof} 
We separately find necessary and sufficient conditions for all four cases.
\\(i) (L, L) is an equilibrium iff the following hold, 
\begin{align}
\beta \wl &\geq  \min \{\dfrac{\beta}{1-\beta}\ch, \beta \cl, \wl, \beta \wh \} ,\label{eq:AL1}\\
\beta \wl &\geq  \min \{\ch, \beta \cl, \dfrac{\beta}{1-\beta}\wl,  \beta \wh \}
\label{eq:AL2}.
\end{align}

Here, (\ref{eq:AL1}) ensures that Platform 2 does not have an incentive to deviate and likewise (\ref{eq:AL2}) ensures that Platform 1 also does not deviate unilaterally. Note that the following inequalities hold,
{
\begin{align}
\beta \wl  &< \beta \cl, \label{eq:ineq1}\\ 
\beta \wl  &< \wl,\label{eq:ineq2}\\
\beta \wl  &< \beta \wh,\label{eq:ineq3}\\
\beta \wl &< \dfrac{\beta}{1-\beta}\wl\label{eq:ineq4}.
\end{align}}

Observe that (\ref{eq:ineq1}), (\ref{eq:ineq3}) follow from Assumption \ref{ass:demandsuppply} and (\ref{eq:ineq2}), (\ref{eq:ineq4}) are a trivial consequence of loyalty $(\beta)$ lying in $(0,1)$. Using (\ref{eq:ineq1}), (\ref{eq:ineq2}), (\ref{eq:ineq3}), it follows that condition (\ref{eq:AL1}) holds iff $\dfrac{\beta}{1-\beta} \ch \leq \beta \wl$.
Similarly from (\ref{eq:ineq1}), (\ref{eq:ineq3}), (\ref{eq:ineq4}) it follows that (\ref{eq:AL2}) holds iff $\ch \leq \beta \wl$. Thus, (L, L) is an equilibrium iff $\dfrac{\beta}{1-\beta} \ch \leq \beta \wl$ and $\ch \leq \beta \wl$ where $\beta \in (0,1)$.
\\(ii) (L, H) is an equilibrium iff the following hold, 

\begin{align}
 \min  \{\dfrac{\beta}{1-\beta}\ch, \beta \cl, \wl, \beta \wh \} \geq \beta \ch,\label{eq:ALH1}\\
\min \{\dfrac{\beta}{1-\beta}\ch, \beta \cl, \wl, \beta \wh \} \geq \beta \wl
\label{eq:ALH2}.
\end{align}

Assumption \ref{ass:demandsuppply} and $0<\beta<1$ also give the following,

\begin{align}
\beta \ch &< \dfrac{\beta}{1-\beta}\ch, \label{eq:ineq5}\\ 
\beta \ch &< \beta \cl,\label{eq:ineq6}\\
\beta \ch &< \beta \wh.\label{eq:ineq7}
\end{align}

It is easy to see that $\beta \ch \leq \wl$ together with (\ref{eq:ineq5}), (\ref{eq:ineq6}), (\ref{eq:ineq7}) is necessary and sufficient for (\ref{eq:ALH1}) to hold. Similarly, (\ref{eq:ineq2}), (\ref{eq:ineq3}), (\ref{eq:ineq1}) and $\beta \wl \leq \dfrac{\beta}{1-\beta} \ch$ ensures that (\ref{eq:ALH2}) holds. Thus, (L, H) is a NE iff $\beta \ch \leq \wl$ and $\beta \wl \leq \dfrac{\beta}{1-\beta} \ch$.
\\(iii) (H, L) is an equilibrium iff the following hold, 

\begin{align}
\min \{\ch, \beta \cl, \dfrac{\beta}{1-\beta}\wl,  \beta \wh \} &\geq   \beta \wl,\label{eq:AHL1} \\
\min \{\ch, \beta \cl, \dfrac{\beta}{1-\beta}\wl,  \beta \wh \} &\geq  \beta \ch. \label{eq:AHL2}
\end{align}

The proof for this case is analogous to Case (ii). 
\\(iv) (H, H) is an equilibrium iff the following hold, 

\begin{align}
\beta \ch &\geq  \min \{\dfrac{\beta}{1-\beta}\ch, \beta \cl, \wl, \beta \wh \} ,\label{eq:AH1}\\
\beta \ch &\geq  \min \{\ch, \beta \cl, \dfrac{\beta}{1-\beta}\wl,  \beta \wh \}\label{eq:AH2}.
\end{align}

Observe that (\ref{eq:ineq5}), (\ref{eq:ineq6}), (\ref{eq:ineq7}) along with $\wl \leq \beta \ch$ is necessary and sufficient for (\ref{eq:AH1}) to hold. We have $\beta \ch < \ch$. Combining this with (\ref{eq:ineq6}), (\ref{eq:ineq7}), $\dfrac{\beta}{1-\beta} \wl \leq \beta \ch$ is a necessary and sufficient for (\ref{eq:AH2}) to hold. Hence, we get a NE at (H, H) iff $\wl \leq \beta \ch$ and $\dfrac{\beta}{1-\beta} \wl \leq \beta \ch$. \end{proof}

\section{Proof of Theorem \ref{thm:eq3}} \label{app:2}

  \begin{lemma}\label{lemma:lem_throughput}
  Consider a general two-player simultaneous move game in two strategies $S_1, S_2$ available to each player and payoffs given by Table (\ref{tab:lemmathroughput}) such that $0<\beta<1$ and $a_1 \leq a_2$. There always exists a pure strategy N.E. for this game.
  Further, 
  \begin{enumerate}
  \item[(i)]$(S_1, S_1)$ can never be a N.E.
  \item[(ii)]$(S_2, S_2)$ is always a N.E.
  \item[(iii)]$(S_1, S_2)$ s a N.E. iff $0<\beta \leq \min \{ 1, \frac{a_1}{a_2} \}$.
  \item[(iv)]$(S_2, S_1)$ is a N.E. iff $1- \frac{a_1}{a_2} \leq \beta$.
  \end{enumerate}
  \end{lemma}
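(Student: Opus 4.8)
The plan is to treat this as an identical-interest $2\times2$ game. In Table~\ref{tab:lemmathroughput} the column player's (Platform~2's) payoff at every cell is a fixed positive multiple of the row player's (Platform~1's) payoff, namely the factor $\frac{1-\beta}{\beta}>0$. Hence Platform~2's best response within any fixed row is exactly the column that maximizes Platform~1's entry in that row, and both players rank the four outcomes by the single matrix $M$ of Platform~1's payoffs. Checking a profile then reduces to verifying two unilateral-deviation inequalities read off from $M$, and existence of a pure NE follows immediately once $(S_2,S_2)$ is shown to be an equilibrium.

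First I would record the elementary orderings forced by $0<\beta<1$ and $a_1\le a_2$: namely $\beta a_1<a_1$, $\beta a_1\le\beta a_2$, $\frac{\beta}{1-\beta}a_1\ge\beta a_1$, and the trivial bounds $\min\{a_1,\beta a_2\}\le\beta a_2$ and $\min\{\frac{\beta}{1-\beta}a_1,\beta a_2\}\le\beta a_2$. These resolve the $\min$ terms sitting in the off-diagonal cells and are what make every subsequent comparison a one-line check.

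For $(S_2,S_2)$, whose payoff is $\beta a_2$, the two relevant off-diagonal cells are $\min$'s that are bounded above by $\beta a_2$, so neither player gains by deviating; this proves (ii) and hence existence. For $(S_1,S_1)$, whose payoff is $\beta a_1$, Platform~2's alternative at $(S_1,S_2)$ yields $\min\{a_1,\beta a_2\}$, which strictly exceeds $\beta a_1$ since $a_1>\beta a_1$ and $\beta a_2\ge\beta a_1$; a profitable deviation always exists, giving (i). For $(S_1,S_2)$ I would verify that Platform~2's stay-condition $\min\{a_1,\beta a_2\}\ge\beta a_1$ holds automatically, leaving Platform~1's condition $\min\{a_1,\beta a_2\}\ge\beta a_2$, which is equivalent to $a_1\ge\beta a_2$, i.e. $\beta\le a_1/a_2$; together with $\beta<1$ this is precisely $0<\beta\le\min\{1,a_1/a_2\}$, giving (iii). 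Symmetrically, for $(S_2,S_1)$ Platform~1's stay-condition holds for all admissible $\beta$, and Platform~2's condition $\min\{\frac{\beta}{1-\beta}a_1,\beta a_2\}\ge\beta a_2$ reduces to $\frac{\beta}{1-\beta}a_1\ge\beta a_2$, i.e. $1-\beta\le a_1/a_2$, i.e. $\beta\ge 1-a_1/a_2$, giving (iv).

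The main obstacle is the bookkeeping around the $\min$ terms: for each profile one of the two deviation inequalities is slack for every $\beta\in(0,1)$ while the other is binding, and I must both identify which branch of each $\min$ is active under $a_1\le a_2$ and confirm that the slack inequality genuinely holds for all $\beta\in(0,1)$, so that the stated bound on $\beta$ is necessary and sufficient rather than merely sufficient. Once the orderings of the first step are in place, each reduction is a direct comparison, and the four characterizations drop out.
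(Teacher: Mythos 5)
Your proof is correct and follows essentially the same route as the paper's: a cell-by-cell check of the unilateral-deviation inequalities, resolving each $\min$ via the elementary orderings forced by $0<\beta<1$ and $a_1\leq a_2$, with the only addition being that you make explicit the proportionality of the two players' payoffs, which the paper uses implicitly. One shared caveat: your strict-inequality step in (i), like the paper's own, tacitly requires $a_1<a_2$, since at $a_1=a_2$ the deviation payoff $\min\{a_1,\beta a_2\}$ equals $\beta a_1$ and $(S_1,S_1)$ becomes a (weak) equilibrium.
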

     \begin{table}[h]
 {
  \begin{center}
    \setlength{\extrarowheight}{2pt}
    \begin{tabular}{cc|c|c|}
      & \multicolumn{1}{c}{} & \multicolumn{2}{c}{}\\
      & \multicolumn{1}{c}{} & \multicolumn{1}{c}{$S_1$}  & \multicolumn{1}{c}{$S_2$} \\\cline{3-4}
      \multirow{2}*{}  & $S_1$ & 
      $\beta a_1$ & 
      $\min \{ a_1, \beta a_2\}$ \\\cline{3-4} 
      & $S_2$ & 
      $\min \{ \frac{\beta}{1-\beta} a_1, \beta a_2\}$& 
      $\beta a_2$ \\\cline{3-4}
    \end{tabular}
    \caption {Payoffs for a two player simultaneous move game with $0<\beta<1$ and $a_1 \leq a_2$. }    \label{tab:lemmathroughput}
    \end{center}}
  \end{table}
 \begin{proof}[of Lemma \ref{lemma:lem_throughput}]
  \begin{enumerate}
  \item[(i)] $(S_1, S_1)$ is a N.E. iff $\beta a_1 \geq \min \{a_1, \beta a_2 \}$ and $\beta a_1 \geq \min \{\frac{\beta}{1-\beta}a_1,  \beta a_2 \}$. However, with $0<\beta<1$ we have
\begin{align}
&\beta a_1 < a_1, \label{eq:ineqc5} \\
&\beta a_1 < \beta a_2, \label{eq:ineqc6} \\
&\beta a_1 < \dfrac{\beta}{1-\beta} a_2. \label{eq:ineqc7} 
\end{align}
From Inequalities (\ref{eq:ineqc5}), (\ref{eq:ineqc5}) and (\ref{eq:ineqc7}) it follows that $(S_1, S_1)$ can never be a N.E.
\item[(ii)] $(S_2,S_2)$ is a N.E. iff $\beta a_2 \geq \min \{a_1, \beta a_2 \}$ and $\beta a_2 \geq \min \{\frac{\beta}{1-\beta}a_1, \beta a_2 \}$. Observe that given any $a,b \in \mathbb{R}$, the inequality $a\geq \min\{a,b\}$ holds by definition and consequently $(S_2,S_2)$ is always a N.E..
\item[(iii)] $(S_1,S_2)$  is a N.E. iff
\begin{align}
&  \min \{a_1, \beta a_2 \} \geq \beta a_1 ,\label{eq:ineqc8}\\
& \min \{a_1, \beta a_2 \} \geq \beta a_2 \label{eq:ineqc9} .
\end{align}
Inequality (\ref{eq:ineqc8}) directly follows when $0<\beta<1$. Note that (\ref{eq:ineqc9}) holds iff $\beta a_2 \leq a_1$. Thus, $(S_1,S_2)$ is a Nash Equilibrium iff $0 < \beta \leq \min\{1,\frac{a_1}{a_2}\}$.\\
\item[(iv)]$(S_2,S_1)$ is a N.E. iff
\begin{align}
&  \min \{\dfrac{\beta}{1-\beta}a_1,  \beta a_2 \} \geq \beta a_1 ,\label{eq:ineqc10}\\
& \min \{\dfrac{\beta}{1-\beta}a_1,  \beta a_2 \} \geq \beta a_2 \label{eq:ineqc11} .
\end{align}
As above, using $0<\beta<1$,  (\ref{eq:ineqc10}) always hold. However, (\ref{eq:ineqc11}) is true iff $\beta a_2 \leq \dfrac{\beta}{1-\beta}a_1$ or equivalently $\beta \geq 1-\frac{a_1}{a_2}$ with $\beta \in (0,1)$.
\end{enumerate}
 \end{proof}
\begin{proof} [of Theorem \ref{thm:eq3}]
Theorem~\ref{thm:eq3} is a consequence of Lemma~\ref{lemma:lem_throughput} with $S_1=L$, $S_2=H$, $a_1=\wl$ and $a_2=\wh$ and noting that from Assumption \ref{ass:demandsuppply}, $\wl \leq \wh$.
\\
\end{proof}


\section{Proof of Theorem~\ref{thm:eq2}} \label{app:3}
\begin{proof}
Theorem~\ref{thm:eq3} follows from Lemma~\ref{lemma:lem_throughput} with $S_1=H$, $S_2=L$, $a_1=\ch$, $a_2=\cl$ and by replacing $\beta$ by $1-\beta$. Further, upon multiplying the payoff matrix with a factor $\frac{\beta}{1-\beta}$ and observing that from Assumption \ref{ass:demandsuppply}, $\ch \leq \cl$, Theorem~\ref{thm:eq2} emerges as a consequence of Lemma~\ref{lemma:lem_throughput}.
\end{proof}

\section{Proof of Theorem~\ref{thm:eqw}} \label{app:4}
\begin{proof}
\noindent(i) (H, H) is a N.E. iff
\begin{align}
& \ph \beta \wh \geq \pl \min\{\wl,\beta\wh\}, \\
& \ph \beta \wh \geq \pl \min\{\dfrac{\beta}{1-\beta}\wl,\beta \wh\}.
\end{align}
Substituting $\rho_{\rm w}=\dfrac{\wl}{\wh}$ and $f=\dfrac{\pl}{\ph}$, the above inequalities can be rewritten as,
\begin{align}
&  \beta  \geq f \min\{\rho_{\rm w},\beta\}, \label{eq:rw_ineqhh1temp}\\
&  \beta  \geq f \min\{\dfrac{\beta}{1-\beta}\rho_{\rm w},\beta \}\label{eq:rw_ineqhh2temp}.
\end{align}
For $a,b,k \in \mathbb{R}^+$, we have $\min \{ka,kb\} =k \min \{a,b\}$ and thus we can divide both sides of (\ref{eq:rw_ineqhh1temp}) and (\ref{eq:rw_ineqhh2temp}) by $\beta$, to get,
\begin{align}
&  1  \geq  f \min \{\dfrac{\rho_{\rm w}}{\beta},1\}, \label{eq:rw_ineqhh1},\\
&  1  \geq f \min\{\dfrac{\rho_{\rm w}}{1-\beta},1\}\label{eq:rw_ineqhh2}.
\end{align}
Noting that $\min \{\dfrac{\rho_{\rm w}}{\beta},1\} \leq 1$ and combining this with $0<f<1$, we see that (\ref{eq:rw_ineqhh1}) always holds for this case. A similar argument suffices to establish the truth of (\ref{eq:rw_ineqhh1}). Thus, (H, H) is a N.E.
 \noindent(ii) (L, L) is a N.E. iff
\begin{align}
& \pl \beta \wl \geq \ph \min\{\dfrac{\beta}{1-\beta}\wl,\beta \wh\}, \\
& \pl \beta \wl \geq \ph \min\{\wl,\beta\wh\}.
\end{align}
The above inequalities can be expressed in terms of $\rho_{\rm w}$, $\beta$ and $f$ as,
\begin{align}
&  f\beta \rho_{\rm w}  \geq \min\{\dfrac{\beta}{1-\beta}\rho_{\rm w},\beta \},\\
&  f\beta \rho_{\rm w}  \geq  \min\{\rho_{\rm w},\beta\} .
\end{align}
Dividing both sides by $\beta \rho_{\rm w}$, we get,
{\color{black}
\begin{align}
&  f   \geq \min\{\dfrac{1}{1-\beta},\dfrac{1}{\rho_{\rm w}} \}, \label{eq:rw_ineqll1}\\
&  f   \geq  \min\{\dfrac{1}{\beta},\dfrac{1}{\rho_{\rm w}}\}  \label{eq:rw_ineqll2}.
\end{align}}
Recall that $0<f,\rho_{\rm w},\beta<1$ and thus $\dfrac{1}{1-\beta},\dfrac{1}{\beta},\dfrac{1}{\rho_{\rm w}}>1$. Therefore, (\ref{eq:rw_ineqll1}) and (\ref{eq:rw_ineqll2}) do not hold and (L, L) is not a N.E. for this case.
\noindent(iii) (L, H) is a N.E. iff
\begin{align}
& \pl \min\{\wl,\beta \wh\} \geq \ph \beta \wh, \\
& \ph \min\{\wl,\beta \wh\} \geq \pl \beta \wl.
\end{align}
Substituting $\rho_{\rm w},\beta$ and $f$ we get,
\begin{align}
& f \min\{\rho_{\rm w},\beta \} \geq \beta , \\
&  \min\{\rho_{\rm w},\beta \} \geq f \beta \rho_{\rm w}.
\end{align}
Dividing by $\beta$,
\begin{align}
&{ \color{black}f \min\{\dfrac{\rho_{\rm w}}{\beta},1 \} \geq 1} ,\label{eq:rw_ineqlh1} \\
&  \min\{\dfrac{1}{\rho_{\rm w}},\dfrac{1}{\beta} \} \geq f .
\end{align}
If (\ref{eq:rw_ineqlh1}) is true, then $f  \geq f\min\{\dfrac{\rho_{\rm w}}{\beta},1 \} \geq 1$ which contradicts the fact that $0<f<1$.
\noindent(iv) (H, L) is a N.E. iff
\begin{align}
&{ \color{black}f \min\{\dfrac{\rho_{\rm w}}{1-\beta},1 \} \geq 1} \label{eq:rw_ineqhl1}, \\
&  \min\{\dfrac{1}{\rho_{\rm w}},\dfrac{1}{1-\beta} \} \geq f .
\end{align}
As above, (\ref{eq:rw_ineqhl1}) does not hold and thus neither (L,H) nor (H, L) are equilibria.
\end{proof}

\section{Proof of Theorem~\ref{thm:revpbplph}} \label{app:5}
\begin{proof}
 We now analyse the conditions in Table~\ref{tab:NEc} in more detail by separately solving the three cases -- $ \beta <1/2,\beta=1/2 $ and $\beta>1/2$.

 \begin{enumerate}

 \item {\color{black}$\beta < \dfrac{1}{2}$}
 	\begin{enumerate}
 		\item $\rho_{\rm c} \leq \beta$

		Table~\ref{tab:NEc_beta_leqhalf_1} gives the conditions for the existence of various pure strategy equilibria.  We now separately analyse the two cases,
\begin{enumerate}
\item $f \geq \dfrac{\rho_{\rm c}}{\beta}$: (L, L) is a N.E.
\item When $f < \dfrac{\rho_{\rm c}}{\beta}$, further consider the two cases,
	\begin{enumerate}
	\item $f< \beta$: (H, H) is a N.E.
	\item $f\geq  \beta $: (H, L) is a N.E. since $\beta \leq f <\dfrac{\rho_{\rm c}}{\beta} $.
	\end{enumerate}
\end{enumerate}
Thus, there always exists a N.E. in pure strategies for this case.

 \begin{table}[h]
	{{\color{black}
		\begin{center}
			\setlength{\extrarowheight}{2pt}
			\begin{tabular}{cc|c|c|}
				& \multicolumn{1}{c}{} & \multicolumn{2}{c}{}\\
				& \multicolumn{1}{c}{} & \multicolumn{1}{c}{L}  & \multicolumn{1}{c}{H} \\\cline{3-4}
				\multirow{2}*{}  & L & $   \dfrac{\rho_{\rm c}}{\beta} \leq f  $ & $  {1-\beta} \leq f \leq  \dfrac{\rho_{\rm c}}{1-\beta} $ \\\cline{3-4} 
				& H & $ \beta \leq f \leq  \dfrac{\rho_{\rm c}}{\beta} $ & $ f \leq \beta $ \\\cline{3-4}
			\end{tabular}
			\caption {Conditions for existence of N.E. for revenue competition game when $\pb \leq \pl< \ph$ and $\rho_{\rm c} \leq \beta < 1-\beta$. }   \label{tab:NEc_beta_leqhalf_1}
	\end{center}}}
\end{table}
 \begin{table}[h]
	{{\color{black}
		\begin{center}
			\setlength{\extrarowheight}{2pt}
			\begin{tabular}{cc|c|c|}
				& \multicolumn{1}{c}{} & \multicolumn{2}{c}{}\\
				& \multicolumn{1}{c}{} & \multicolumn{1}{c}{L}  & \multicolumn{1}{c}{H} \\\cline{3-4}
				\multirow{2}*{}  & L & $   \dfrac{\rho_{\rm c}}{f} \leq \beta <1 $ & $  {1-f},1-\dfrac{\rho_{\rm c}}{f}  \leq \beta <1   $ \\\cline{3-4} 
				& H & $ 0<\beta \leq f, \dfrac{\rho_{\rm c}}{f} $ & $ f \leq \beta<1 $ \\\cline{3-4}
			\end{tabular}
			\caption {Conditions for existence of N.E. when $\pb \leq \pl< \ph$ and $\rho_{\rm c} \leq \beta \leq 1-\beta$ }   
	\end{center}}}
\end{table}
\item  $\beta < \rho_{\rm c} < 1-\beta$

Table~\ref{tab:NEc_beta_leqhalf_2} gives the conditions for the existence of N.E. Note that (L, L) is never a N.E. Moreover, at least one amongst (H, H), or (H, L) is always a N.E. Thus, a pure strategy N.E. always exists. Recall that the ratio $\dfrac{\rho_{\rm c}}{f}$ represents the ratio of the revenue at high price to that at a low price. Observe that for this case, $\dfrac{\rho_{\rm c}}{f} \geq 1$ indeed corresponds to the condition for existence of N.E. at (H, H).
 \begin{table}[h]
	{\color{black}
		\begin{center}
			\setlength{\extrarowheight}{2pt}
			\begin{tabular}{cc|c|c|}
				& \multicolumn{1}{c}{} & \multicolumn{2}{c}{}\\
				& \multicolumn{1}{c}{} & \multicolumn{1}{c}{L}  & \multicolumn{1}{c}{H} \\\cline{3-4}
				\multirow{2}*{}  & L & Never & $1-\beta \leq f \leq \dfrac{\rho_{\rm c}}{1-\beta}$ \\\cline{3-4} 
				& H & $  \rho_{\rm c} \leq f$ & $ f \leq \rho_{\rm c}$ \\\cline{3-4}
			\end{tabular}
			\caption {Conditions for existence of N.E. for revenue competition game when $\pb \leq \pl< \ph.$ and $\beta < \rho_{\rm c} < 1-\beta$.}  \label{tab:NEc_beta_leqhalf_2}
	\end{center}}
\end{table}
\item  $1-\beta \leq \rho_{\rm c}$

Table~\ref{tab:NEc_beta_leqhalf_3} gives the conditions for N.E. Note that (L, L) is never a N.E. and (L, H) is a N.E. iff (H, L) is a N.E. Moreover, at least one amongst (H, H), or, (L, H) and (H, L) is always a N.E. Thus, a pure strategy N.E. always exists.
 \begin{table}[h]
	{\color{black}
		\begin{center}
			\setlength{\extrarowheight}{2pt}
			\begin{tabular}{cc|c|c|}
				& \multicolumn{1}{c}{} & \multicolumn{2}{c}{}\\
				& \multicolumn{1}{c}{} & \multicolumn{1}{c}{L}  & \multicolumn{1}{c}{H} \\\cline{3-4}
				\multirow{2}*{}  & L & Never & $  \rho_{\rm c} \leq f$ \\\cline{3-4} 
				& H & $  \rho_{\rm c} \leq f$ & $ f \leq \rho_{\rm c}$ \\\cline{3-4}
			\end{tabular}
			\caption {Conditions for existence of N.E.  for revenue competition game when $\pb \leq \pl< \ph$ and $\beta <1-\beta \leq \rho_{\rm c}$.}   \label{tab:NEc_beta_leqhalf_3}
	\end{center}}
\end{table}
\end{enumerate}

\item {\color{black}$\beta = \dfrac{1}{2}$}

Table~\ref{tab:NEc_beta_half} gives the conditions for N.E. Observe that in this case, at least one pure strategy N.E. exists for all values of $0<f<1$. Further $(L, H)$ is a N.E. iff $(H, L)$ is a N.E.
\begin{table}[h]
	{{\color{black}
			\begin{center}
				\setlength{\extrarowheight}{2pt}
				\begin{tabular}{cc|c|c|}
					& \multicolumn{1}{c}{} & \multicolumn{2}{c}{}\\
					& \multicolumn{1}{c}{} & \multicolumn{1}{c}{L}  & \multicolumn{1}{c}{H} \\\cline{3-4}
					\multirow{2}*{}  & L & $   2\rho_{\rm c} \leq f  $ & $  \dfrac{1}{f} \leq \min\{2,\dfrac{1}{\rc}\}, f \leq 2 \rc $ \\\cline{3-4} 
					& H &$ \dfrac{1}{f} \leq \min\{2,\dfrac{1}{\rc}\}, f \leq 2 \rc $ & $ \dfrac{1}{f} \geq \min\{2,\dfrac{1}{\rc}\}$ \\\cline{3-4}
				\end{tabular} 			\caption {Conditions for existence of N.E. for revenue competition game when $\pb \leq \pl< \ph$ and $\beta=\dfrac{1}{2}$. }    \label{tab:NEc_beta_half}
	\end{center}}}
\end{table}
 	\item $\beta > \dfrac{1}{2}$.
	
	Note that the platforms only differ in their customer loyalties ($\beta$ and $1-\beta$). $\beta<1/2$ corresponds to a higher value of loyalty for Platform 2 whereas $\beta>1/2$  corresponds to higher loyalty for Platform 1. This case can be argued as done previously for the case $\beta < \dfrac{1}{2}$. 

 \end{enumerate}
 \end{proof}
 
 \section{Proof of Theorem~\ref{thm:revplpbph}} \label{app:6}
 
  \begin{lemma}\label{lemma:pure}
  Consider a general two-player simultaneous move game in two strategies with payoffs given by Table \ref{tab:payoffex}. If the payoffs are such that the inequalities $a_{22}<a_{12}$, $b_{22}>b_{21}$, $b_{11}>b_{12}$ and $a_{11}<a_{21}$ are never simultaneously satisfied. 
  Additionally, if $a_{22}>a_{12}$, $b_{22}<b_{21}$, $b_{11}<b_{12}$ and  $a_{11}>a_{21}$ cannot simultaneously hold then there always exists a pure strategy N.E. for this game.
  \end{lemma}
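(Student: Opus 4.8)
The plan is to prove the contrapositive: assuming the game has \emph{no} pure strategy Nash equilibrium, I will show that one of the two forbidden quadruples of inequalities must hold, contradicting the hypotheses. Writing the row player's strategies as $A_1,A_2$ and the column player's as $B_1,B_2$, with payoffs $(a_{ij},b_{ij})$ in cell $(A_i,B_j)$ as in Table~\ref{tab:payoffex}, the four equilibrium conditions are immediate: $(A_i,B_j)$ is a N.E. exactly when the row player does not want to switch rows and the column player does not want to switch columns. Since these are weak inequalities ($\geq$), the statement ``$(A_i,B_j)$ is not a N.E.'' always yields a \emph{strict} inequality favouring a deviation by at least one player. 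The first observation I would record is that the two hypotheses are precisely the two best-response cycles of a $2\times2$ game: the quadruple $a_{22}<a_{12},\,b_{22}>b_{21},\,b_{11}>b_{12},\,a_{11}<a_{21}$ is exactly the deviation cycle $(A_1,B_1)\to(A_2,B_1)\to(A_2,B_2)\to(A_1,B_2)\to(A_1,B_1)$ in which each arrow is a profitable unilateral deviation (alternating between the two players), and the second quadruple is this same cycle traversed in the opposite orientation. Ruling out both cycles is what should force existence.

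First I would anchor the deduction at the cell $(A_2,B_2)$. Because it is not an equilibrium, at least one player strictly gains by deviating, so either $a_{12}>a_{22}$ (the row player prefers $A_1$) or $b_{21}>b_{22}$ (the column player prefers $B_1$); these two cases are exhaustive. In the case $a_{12}>a_{22}$, I would walk around the four cells in order: non-equilibrium of $(A_1,B_2)$ together with $a_{12}>a_{22}$ forces $b_{11}>b_{12}$; non-equilibrium of $(A_1,B_1)$ together with $b_{11}>b_{12}$ forces $a_{11}<a_{21}$; and non-equilibrium of $(A_2,B_1)$ together with $a_{11}<a_{21}$ forces $b_{22}>b_{21}$. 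These four strict inequalities are exactly the first forbidden configuration. The case $b_{21}>b_{22}$ is handled by the mirror-image traversal, which reconstructs the second forbidden configuration in the same way. Since the hypotheses exclude both, neither case can occur, and hence some cell must be an equilibrium.

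The engine of the argument is that at each cell ``not a N.E.'' is a disjunction of two deviation inequalities, and the inequality already in hand from the previous step contradicts one disjunct, thereby pinning down the other; this is what makes the cycle close up deterministically. The one point I expect to require care — and would verify explicitly — is that each ``not a N.E.'' disjunction is translated with the correct orientation (who deviates, and to which strategy), and that the eliminated disjunct is genuinely \emph{contradicted}, not merely weakened, by the strict inequality carried over. Because equilibrium is defined with $\geq$ and its negation is strict, there is no slack left by ties, so the two cases emanating from $(A_2,B_2)$ are truly exhaustive and the chain of implications leaves no gap, completing the contrapositive.
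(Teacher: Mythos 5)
Your proof is correct, and it is organized differently from the paper's. The paper argues directly: it enumerates the four sign patterns of $(a_{22}-a_{12},\,b_{22}-b_{21})$, sub-enumerates on $(a_{11}-a_{21},\,b_{11}-b_{12})$ where needed, and exhibits a specific equilibrium cell in every subcase except the two excluded quadruples (noting that any tie in those quadruples restores an equilibrium). You instead take the contrapositive and chase the best-response cycle: starting from the disjunction given by ``$(A_2,B_2)$ is not a N.E.,'' each subsequent ``not a N.E.'' disjunction has one disjunct killed by the strict inequality already in hand, so the chain closes deterministically into one of the two forbidden quadruples depending on which branch you start in. Both arguments establish the same combinatorial fact; yours is shorter, makes transparent that the two excluded quadruples are exactly the two orientations of the deviation cycle of a $2\times 2$ game, and handles the strict-versus-weak boundary more explicitly than the paper does (the paper's subcase with ``if any \ldots admits an equality'' is correct but stated loosely). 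What the paper's enumeration buys in exchange is a complete table of \emph{which} profile is an equilibrium under \emph{which} sign pattern, which is the form actually consumed when the lemma is applied to Table~\ref{tab:NEo} in the proof of Theorem~\ref{thm:revplpbph}; your argument proves existence but does not produce that inventory. No gaps: your two cases at $(A_2,B_2)$ are exhaustive (they need not be exclusive, which is harmless), and each step of the traversal eliminates the correct disjunct with the correct orientation.
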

     \begin{table}[h]
 {
  \begin{center}
    \setlength{\extrarowheight}{2pt}
    \begin{tabular}{cc|c|c|}
      & \multicolumn{1}{c}{} & \multicolumn{2}{c}{}\\
      & \multicolumn{1}{c}{} & \multicolumn{1}{c}{L}  & \multicolumn{1}{c}{H} \\\cline{3-4}
      \multirow{2}*{}  & L & 
      $(a_{11}, b_{11})$ & 
      $(a_{12}, b_{12})$ \\\cline{3-4} 
      & H & 
      $(a_{21}, b_{21})$& 
      $(a_{22}, b_{22})$ \\\cline{3-4}
    \end{tabular}
    \caption {Payoffs for a two player simultaneous move game.}    \label{tab:payoffex}
    \end{center}}
  \end{table}
 \begin{proof}[Proof of Lemma \ref{lemma:pure}]
 The conditions under which various pure strategy N.E. exist are listed in Table \ref{tab:payoffex_conditions}. We show the existence of at least one N.E. in all cases other than when  $a_{22}<a_{12}$, $b_{22}>b_{21}$, $b_{11}>b_{12}$ and $a_{11}<a_{21}$ or $a_{22}>a_{12}$, $b_{22}<b_{21}$, $b_{11}<b_{12}$ and  $a_{11}>a_{21}$. We break down our inspection for equilibria into the following cases,
 \begin{enumerate}
  \item  $a_{22} \geq a_{12}$,  $ b_{22} \geq b_{21}$, then (H, H) is a N.E.
  \item  $a_{22} \leq a_{12}$,  $ b_{22} \leq b_{21}$, 
  		\begin{enumerate}
		\item $a_{11} \geq a_{21}$,  $ b_{11} \geq b_{12}$, then (L, L) is a N.E.
		\item $a_{11} \leq a_{21}$,  $ b_{11} \leq b_{12}$, then (L, H) and (H, L) are N.E.
		\item $a_{11} \geq a_{21}$,  $ b_{11} \leq b_{12}$, then (L, H) is a N.E.
		\item $a_{11} \leq a_{21}$,  $ b_{11} \geq b_{12}$, then (H, L) is a N.E.
		\end{enumerate}
  \item $a_{22} \leq a_{12}$,  $ b_{22} \geq b_{21}$, 
    		\begin{enumerate}
		\item $a_{11} \geq a_{21}$,  $ b_{11} \geq b_{12}$, then (L, L) is a N.E.
		\item $a_{11} \leq a_{21}$,  $ b_{11} \leq b_{12}$, then (L, H) is a  N.E.
		\item $a_{11} \geq a_{21}$,  $ b_{11} \leq b_{12}$, then (L, H) is a N.E.
		\item $a_{11} \leq a_{21}$,  $ b_{11} \geq b_{12}$, then it can be verified if any amongst $a_{22} \leq a_{12}$,  $ b_{22} \geq b_{21}$, $a_{11} \leq a_{21}$ or  $ b_{11} \geq b_{12}$ admits an equality then a N.E. exists. The condition $a_{22}<a_{12}$, $b_{22}>b_{21}$, $b_{11}>b_{12}$ and $a_{11}<a_{21}$ in the lemma describes the condition under which none of the equations admit an equality. 
		\end{enumerate}
   \item $a_{22} \geq a_{12}$,  $ b_{22} \leq b_{21}$ -- This case can be argued as above.
  \end{enumerate}
  Thus if the payoffs are such that $a_{22}<a_{12}$, $b_{22}>b_{21}$, $b_{11}>b_{12}$ and $a_{11}<a_{21}$ or $a_{22}>a_{12}$, $b_{22}<b_{21}$, $b_{11}<b_{12}$ and  $a_{11}>a_{21}$ never simultaneously hold, then there always exists a pure strategy N.E.
       \begin{table}[h]
    {
  \begin{center}
    \setlength{\extrarowheight}{2pt}
    \begin{tabular}{cc|c|c|}
      & \multicolumn{1}{c}{} & \multicolumn{2}{c}{}\\
      & \multicolumn{1}{c}{} & \multicolumn{1}{c}{L}  & \multicolumn{1}{c}{H} \\\cline{3-4}
      \multirow{2}*{}  & L & 
      $ a_{11} \geq a_{21}$,  $ b_{11} \geq b_{12}$& 
      $ a_{22} \leq a_{12}$,  $ b_{11} \leq b_{12}$ \\\cline{3-4} 
      & H & 
      $ a_{11} \leq a_{21}$,  $ b_{22} \leq b_{21}$ & 
      $  a_{22} \geq a_{12}$,  $ b_{22} \geq b_{21}$ \\\cline{3-4}
    \end{tabular}
    \caption {Conditions for existence of N.E. for game with payoffs in Table (\ref{tab:payoffex}).}    \label{tab:payoffex_conditions}
    \end{center}}
  \end{table}
\end{proof}

\begin{proof}[of Theorem~\ref{thm:revplpbph}]
  We show that all of the following four inequalities cannot hold simultaneously,
  \begin{align}
  \frac{1}{f}   &<    \min \{ \frac{1}{1-\beta}, \frac{1}{\rc}, \frac{1}{\rho\beta},  \frac{1}{\rho \rw} \}\label{eq:o1}\\
   \frac{1}{f}  &> \min \{ \frac{1}{\beta}, \frac{1}{\rc}, \frac{1}{\rho(1-\beta)},  \frac{1}{\rho \rw} \},\label{eq:o2}\\
   f& > \frac{\rho}{1-\beta},\label{eq:o3}\\
   f &< \frac{\rho}{\beta}\label{eq:o4}.
  \end{align}
  Combining (\ref{eq:o3}) and (\ref{eq:o4}), we get 
   \begin{align}
  \beta<1-\beta. \label{eq:o5}
    \end{align} 
  Also, combining (\ref{eq:o3}) with the fact that $\beta \in (0,1)$, we have 
   \begin{align}
   (1-\beta)\rho<\rho<\dfrac{\rho}{1-\beta}<&f, \label{eq:o6}\\
      (1-\beta)\rho<&f. \label{eq:o6b}
     \end{align}
     For  (\ref{eq:o1}) and (\ref{eq:o2}) to simultaneously hold, atleast one of the following  must be true,
 \begin{align}
 \dfrac{1}{\beta} &< \dfrac{1}{f}<\min \{ \frac{1}{1-\beta}, \frac{1}{\rc}, \frac{1}{\rho\beta},  \frac{1}{\rho \rw} \},\label{eq:o7}\\
  \dfrac{1}{\rho(1-\beta)} &< \dfrac{1}{f}<\min \{ \frac{1}{1-\beta}, \frac{1}{\rc}, \frac{1}{\rho\beta},  \frac{1}{\rho \rw} \}\label{eq:o8}.
 \end{align}
 Observe that (\ref{eq:o7}) cannot hold as it violates (\ref{eq:o5}). Also, note that (\ref{eq:o8}) cannot hold as it violates (\ref{eq:o6b}). Thus, (\ref{eq:o1}), (\ref{eq:o2}), (\ref{eq:o3}) and (\ref{eq:o4}) cannot hold simultaneously. Arguing on similar lines, we can show that  (\ref{eq:o1}) -- (\ref{eq:o4}) with $\beta$ replaced by $1-\beta$ cannot hold simultaneously as well. Thus, using Lemma \ref{lemma:pure}, there always exists a N.E. for this case.
\end{proof}
\bibliographystyle{spmpsci}      

%
%

\bibliography{bibliography2} 

\begin{thebibliography}{10}
\providecommand{\url}[1]{{#1}}
\providecommand{\urlprefix}{URL }
\expandafter\ifx\csname urlstyle\endcsname\relax
  \providecommand{\doi}[1]{DOI~\discretionary{}{}{}#1}\else
  \providecommand{\doi}{DOI~\discretionary{}{}{}\begingroup
  \urlstyle{rm}\Url}\fi

\bibitem{alarabi2016demonstration}
Alarabi, L., Cao, B., Zhao, L., Mokbel, M.F., Basalamah, A.: A demonstration of
  sharek: an efficient matching framework for ride sharing systems.
\newblock In: Proceedings of the 24th ACM SIGSPATIAL International Conference
  on Advances in Geographic Information Systems, p.~95. ACM (2016)

\bibitem{banerjee2016multi}
Banerjee, S., Freund, D., Lykouris, T.: Multi-objective pricing for shared
  vehicle systems.
\newblock arXiv preprint arXiv:1608.06819  (2016)

\bibitem{benjaafar2018peer}
Benjaafar, S., Kong, G., Li, X., Courcoubetis, C.: Peer-to-peer product
  sharing: Implications for ownership, usage, and social welfare in the sharing
  economy.
\newblock Management Science \textbf{65}(2), 477--493 (2018)

\bibitem{caillaud2003chicken}
Caillaud, B., Jullien, B.: Chicken \& egg: Competition among intermediation
  service providers.
\newblock RAND journal of Economics pp. 309--328 (2003)

\bibitem{chen2015peeking}
Chen, L., Mislove, A., Wilson, C.: Peeking beneath the hood of uber.
\newblock In: Proceedings of the 2015 ACM Conference on Internet Measurement
  Conference, pp. 495--508. ACM (2015)

\bibitem{chen2016dynamic}
Chen, M.K., Sheldon, M.: Dynamic pricing in a labor market: Surge pricing and
  flexible work on the uber platform.
\newblock In: EC, p. 455 (2016)

\bibitem{eisenmann2006strategies}
Eisenmann, T., Parker, G., Van~Alstyne, M.W.: Strategies for two-sided markets.
\newblock Harvard business review \textbf{84}(10), 92 (2006)

\bibitem{fang2017prices}
Fang, Z., Huang, L., Wierman, A.: Prices and subsidies in the sharing economy.
\newblock In: Proceedings of the 26th International Conference on World Wide
  Web, pp. 53--62. International World Wide Web Conferences Steering Committee
  (2017)

\bibitem{kulkarni2013consistency}
Kulkarni, A.A., Shanbhag, U.V.: On the consistency of leaders' conjectures in
  hierarchical games.
\newblock In: Decision and Control (CDC), 2013 IEEE 52nd Annual Conference on,
  pp. 1180--1185 (2013).
\newblock \doi{10.1109/CDC.2013.6760042}

\bibitem{kulkarni2014shared}
Kulkarni, A.A., Shanbhag, U.V.: A shared-constraint approach to multi-leader
  multi-follower games.
\newblock Set-Valued and Variational Analysis \textbf{22}(4), 691--720 (2014)

\bibitem{rochet2003platform}
Rochet, J.C., Tirole, J.: Platform competition in two-sided markets.
\newblock Journal of the european economic association \textbf{1}(4), 990--1029
  (2003)

\bibitem{rochet2006two}
Rochet, J.C., Tirole, J.: Two-sided markets: a progress report.
\newblock The RAND journal of economics \textbf{37}(3), 645--667 (2006)

\bibitem{rysman2009economics}
Rysman, M.: The economics of two-sided markets.
\newblock The Journal of Economic Perspectives \textbf{23}(3), 125--143 (2009)

\bibitem{spcom18}
Sood, M., Kulkarni, A.A., Moharir, S.: Platform competition for throughput in
  two-sided freelance markets.
\newblock In: Proceedings of International Conference on Signal Processing and
  Communications (SPCOM) 2018. IEEE (2018)

\bibitem{comsnets18}
Sood, M., Moharir, S., Kulkarni, A.A.: Pricing in two-sided markets in the
  presence of free upgrades.
\newblock In: Proceedings of 10th International Conference on Communication
  Systems and Networks (COMSNETS), 2018. IEEE (2018)

\bibitem{stiglic2015benefits}
Stiglic, M., Agatz, N., Savelsbergh, M., Gradisar, M.: The benefits of meeting
  points in ride-sharing systems.
\newblock Transportation Research Part B: Methodological \textbf{82}, 36--53
  (2015)

\bibitem{weyl2010price}
Weyl, E.G.: A price theory of multi-sided platforms.
\newblock The American Economic Review \textbf{100}(4), 1642--1672 (2010)

\end{thebibliography}
\end{document}